\crefname{section}{Section}{Sections}
\crefname{subsection}{\S}{\S\S}
\theoremstyle{change}
\newtheorem{lemma}{Lemma}[section]
\newtheorem{proposition}[lemma]{Proposition}
\newtheorem{corollary}[lemma]{Corollary}
\newtheorem{theorem}[lemma]{Theorem}
\theoremstyle{nonumberplain}
\theoremstyle{change}
\newtheorem{de}[lemma]{Definition}
\newtheorem{ex}[lemma]{Example}
\newtheorem{re}[lemma]{Remark}
\crefname{definition}{definition}{definitions}
\crefname{lemma}{lemma}{lemmas}
\crefname{proposition}{proposition}{propositions}
\crefname{ex}{example}{examples}
\crefname{remark}{remark}{remarks}
\crefname{corollary}{corollary}{corollaries}
\crefname{theorem}{theorem}{theorems}
\crefname{equation}{}{}
\theoremstyle{nonumberplain}
\newtheorem{proof}{Proof}
\newtheorem{proof of main}{Proof of \Cref{th.main}}
\newtheorem{proof of simplfree}{Proof of \Cref{th.simplicity_free_prod}}
\DeclareMathOperator{\id}{id}
\newcommand\bZ{{\mathbb Z}}
\newcommand\bR{{\mathbb R}}
\newcommand\bC{{\mathbb C}}
\newcommand\cC{{\mathcal C}}
\newcommand\cM{{\mathcal M}}
\DeclareMathOperator{\lker}{\cat{LKer}}
\DeclareMathOperator{\rker}{\cat{RKer}}
\DeclareMathOperator{\hker}{\cat{HKer}}
\newcommand{\define}[1]{{\em #1}}
\newcommand{\cat}[1]{\textsc{#1}}
\newcommand{\qedhere}{\mbox{}\hfill\ensuremath{\blacksquare}}
\title{Centers, cocenters and simple quantum groups}
\author{Alexandru Chirvasitu\footnote{University of California at Berkeley, \url{chirvasitua@math.berkeley.edu}}}
\begin{document}
\maketitle

\begin{abstract}
We define the notion of a (linearly reductive) center for a linearly reductive quantum group, and show that the quotient of a such a quantum group by its center is simple whenever its fusion semiring is free in the sense of Banica and Vergnioux. We also prove that the same is true of free products of quantum groups under very mild non-degeneracy conditions. Several natural families of compact quantum groups, some with non-commutative fusion semirings and hence very ``far from classical'', are thus seen to be simple. Examples include quotients of free unitary groups by their centers, recovering previous work, as well as quotients of quantum reflection groups by their centers. 
\end{abstract}

\noindent {\em Keywords: cosemisimple Hopf algebra, CQG algebra, simple compact quantum group, normal quantum subgroup, cocenter}

\tableofcontents


\section*{Introduction}

Quantum groups of function algebra type, in the spirit of, say, \cite{MR901157,MR1016381,MR1015339}, appear naturally as Hopf algebras coacting universally on various structures (such as a vector space endowed with an $R$-matrix in the case of \cite{MR1015339}, for example, or a quadratic algebra in \cite{MR1016381}). The point of view adopted in this paper is that of the sources just cited (and dual to that of e.g. \cite{MR797001,MR934283}): Hopf algebras are regarded as algebras of (appropriately nice) functions on fictitious objects referred to as quantum groups. In fact, the only Hopf algebras we consider here are cosemisimple, in the sense that their categories of comodules (to be regarded as representations of the underlying quantum group) are semisimple; the phrase `linearly reductive' in the abstract refers to this semisimplicity property, by analogy with the use of the term `linearly reductive' in the context of algebraic groups. For compilations of references available at the time and overviews of the subject the reader can consult, for example, \cite[$\S$7]{MR1358358} or \cite[$\S$9]{MR1492989}. 

The theory of compact quantum groups initiated by Woronowicz in \cite{MR901157} (in non-quite-final form in this early paper) has led to the discovery of numerous examples that do not arise as deformed function algebras of ordinary Lie groups, and to an explosion in the field. The papers \cite{MR2511633,MR2504527,Wang_new} provide a wealth of information, as do the references therein. For more general surveys of the (already vast, at the time) literature on compact quantum groups we refer to \cite{MR1616348,MR1741102}. We are interested here in the purely algebraic counterpart of Woronowicz's $C^*$-algebraic notion of a compact quantum group, i.e. in the so-called CQG algebras of \cite{MR1310296} (see \Cref{se.prel}), and in fact even more generally, in cosemisimple Hopf algebras. 

In the course of trying to push analogies to ordinary compact groups as far as possible, one might be led quite naturally to ask what a \define{simple} compact (or more generaly, linearly reductive) quantum group is. The notion was introduced in \cite{MR2504527}, where Wang also shows that some of the well-known examples in the literature are simple (e.g. deformed function algebras of simple compact Lie groups). One problem posed in \cite{MR2504527} is to provide examples of simple compact quantum groups with non-commutative fusion ring (i.e. Grothendieck ring of the category of comodules; see \Cref{se.prel} for terminology); this is addressed in \cite{chirvasitu:123509}, where it is shown that free unitary groups, which are to compact quantum groups what ordinary unitary groups are to compact Lie groups, become simple once we quotient out a one-dimensional central torus (see \cite[Theorem 1]{chirvasitu:123509}, and below). 

The present paper is in a sense a sequel to \cite{chirvasitu:123509}, and is concerned with extending the earlier results to a wider class of linearly reductive quantum groups. It is organized as follows: 

\Cref{se.prel} is devoted to the preparations needed afterwards. We fix some notations, recall some reoccuring specific examples of compact quantum groups, review useful notions such as that of free fusion semiring, etc.

In \Cref{se.center} we introduce the notion of center for a linearly reductive quantum group. The idea is simple enough, and is based on the representation-theoretic characterization of the center of a compact Lie group (e.g. \cite{MR2130607}). We also introduce the quotient of a quantum group by its center, referring to it as the cocenter (\Cref{def.center_cocenter}). The main result of the section is \Cref{pr.alt_descr}, which gives a convenient description of the cocenter and will come up again and again in later sections.  

The first main result of the paper is \Cref{th.main}, which says that cocenters of quantum groups with free fusion semirings in the sense of \cite{MR2511633} are simple. This generalizes \cite[Theorem 1]{chirvasitu:123509}, and provides new examples of simple quantum groups, arising naturally as cocenters. 

Finally, in \Cref{se.free_prod} we study free products of quantum groups in the sense of \cite{MR1316765} (i.e. quantum groups corresponding to coproducts of families of Hopf algebras; see below for details). The main results are \Cref{th.centers_free_prod}, which says that the center construction preserves free products, and \Cref{th.simplicity_free_prod}, stating that a free product of at least two non-trivial linearly reductive quantum groups has simple cocenter. This provides even more examples of simple quantum groups with highly non-commutative fusion semirings, as well as a systematic way of obtaining such examples.  

The proofs are elementary, in that they consist of little more than word combinatorics carried out in fusion semirings.

\subsection*{Acknowledgement:} I would like to thank the anonymous referee for a very thorough reading of a previous draft, and the numerous suggestions on the improvement of both exposition and content.


\section{Preliminaries}\label{se.prel}

Although on occasion we specialize the discussion to compact quantum groups and hence complex Hopf $*$-algebras, the main objects throughout are cosemisimple Hopf algebras over some algebraically closed field $k$ (possibly of positive characteristic). Comodules are right and finite-dimensional unless specified otherwise, and the category of $C$-comodules is denoted by $\cM^C$. The notation pertaining to Hopf algebra or coalgebra theory is standard: $\Delta$, $\varepsilon$ and $S$ for comultiplications, counits and antipodes respectively (maybe with indices, as in $\Delta_C$ for the comultiplication of the coalgebra $C$), and Sweedler notation minus the summation sign for comultiplications and comodule structures, i.e. $\Delta(x)=x_1\otimes x_2$ and $v\mapsto v_0\otimes v_1$ respectively. For all of this and other background on Hopf algebras or coalgebras we refer to \cite{MR1786197,MR1243637}.

All coalgebras anywhere in this paper are, as mentioned before, \define{cosemisimple}, in the sense that their categories of comodules are semisimple. Since we are working over algebraically closed fields, this is the same as saying that the coalgebra is a direct sum of matrix coalgebras, i.e. duals of matrix algebras $M_n(k)$. To any $C$-comodule $V$ we associate the \define{coefficient subcoalgebra} $C_V\le C$ defined as the smallest subcoalgebra such that the comodule structure map $V\to V\otimes C$ factors through $V\otimes C_V$. This implements a bijection between (isomorphism classes of) simple $C$-comodules and matrix coalgebra summands of $C$. Moreover, if $C$ happens to be a Hopf algebra, the correspondence further agrees with the rest of the structure: $C_{V\otimes W}$ is exactly the product $C_VC_W$, while $C_{V^*}=S(C_V)$. We write $\widehat{C}$ for the set of isomorpism classes of simple comodules. 

One particularly nice way of being cosemisimple is to be a \define{CQG algebra}. We recall here only that these are complex Hopf $*$-algebras (i.e. $*$ is an antilinear, multiplication-reversing involution so that both $\Delta$ and $\varepsilon$ are $*$-algebra maps) with the aditional property that all $H$-comodules admit an inner product invariant under $H$ in some sense (which we will not need to make precise below). The idea is that such objects should behave like algebras of representative functions on a compact group, and the inner product condition mimics the existence of an invariant inner product for any finite-dimensional complex representation of a compact group. We refer to \cite{MR1310296}, where the notion first appeared, or alternatively to \cite[$\S$11]{MR1492989}, for all of the necessary background. As far as this paper is concerned, CQG algebras enter the picture by virtue of being cosemisimple Hopf algebras. 

By `quantum group' (sometimes, for clarity, `linearly reductive quantum group') we mean the object dual to a cosemisimple Hopf algebra. When the latter is a CQG algebra, we might call the dual object a `compact quantum group', as is customary in the literature. 

One notion that will play an important role is that of \define{fusion semiring} $R_+(H)$ for a Hopf algebra $H$. This is simply the Grothendieck semiring of its category of comodules, with addition induced by direct sums and multiplication induced by taking tensor products. As an additive monoid, it is free abelian on a set $\{r_V\}$ indexed by the simple $H$-comodules $V$. It has a natural multiplication-reversing involution $x\mapsto x^*$ induced by taking duals at the categorical level, i.e. $x^*$ is the representation contragredient to $x$ (the fact that $*$ is an involution follows from cosemisimplicity). We will often blur the line between a comodule and its class in the fusion semiring, and may write for instance $C_x$ for $C_V$ if $x\in R_+(H)$ is the class of the $H$-comodule $V$, may omit tensor products between comodules as in $VW$ instead of $V\otimes W$, etc. We will on occasion also refer to the fusion ring $R(H)$ of $H$: It is the Grothendieck ring (as opposed to the semiring) of the category of comodules. Both $R$ and $R_+$ are partially ordered by making $R_+$ the positive cone. In other words, $x\le y$ in $R(H)$ provided $y=x+z$ for some $z\in R_+(H)$. In $R_+$ this is the same as saying that the representation whose class $y\in R_+$ can be written as a direct sum of the representations corresponding to $x$ and $z$.  

There is a bijective correspondence between Hopf subalgebras of a Hopf algebra $H$ and \define{based} involution-invariant sub-semirings of $R_+(H)$. Here, `based' means being the span of a set of (classes of) simple comodules; since all sub-semirings in this paper are based in this sense, we will simply drop the adjective. Indeed, to a Hopf subalgebra $K\le H$ the (fully faithful) scalar corestriction functor on comodules induces an embedding $R_+(K)\to R_+(H)$, while in order to go in the other direction, to any (based) sub-semiring $R_+$ of $R_+(H)$ we can associate the direct sum of those matrix subcoalgebras of $H$ which correspond via $V\mapsto C_V$ to simples in $R_+$. 

As a final general remark complementing the previous paragraph, note that a Hopf subalgebra of a CQG algebra is automatically invariant under the $*$-structure, and hence the discussion above applies verbatim, without our having to worry about an additional $*$-invariance property.


\subsection{Free fusion semirings}\label{subse.free}

These will be particularly amenable to the kind of analysis carried out in \Cref{se.simplicity}. Before explaining what freeness is, we take a short detour to fix some notation.

We will often consider structures $(R,*,\circ)$, where $R$ is a set, $*:R\to R$ is an involution, and $\circ:R\times R\to R\cup\{\emptyset\}$ is a map usually referred to as the fusion. Both $*$ and $\circ$ are extended to the free monoid $\langle R\rangle$ on $R$ (thought of as the set of words on the alphabet $R$ with juxtaposition as multiplication) by making $*$ multiplication-reversing, and by \[ (r_1\ldots r_k)\circ(s_1\ldots s_l) = r_1\ldots r_{k-1}(r_k\circ s_1)s_2\ldots s_l,\ r_i,s_j\in R \] respectively, where $r_1\ldots r_k$ and $s_1\ldots s_l$ are arbitrary words on the alphabet $R$, regarded as elements of the free monoid $\langle R\rangle$. The right hand side is $\emptyset$ whenever $r_k\circ s_1=\emptyset$.  

Following \cite[10.2]{MR2511633}, we have:

\begin{de}\label{def.free}
The fusion semiring of a cosemisimple Hopf algebra $H$ is said to be \define{free on $(R,*,\circ)$} if there is some indexing $\langle R\rangle\ni x\mapsto a_x\in \widehat{H}$ of the set of simple comodules by $\langle R\rangle$, with the monoidal unit corresponding to the empty word and the operation of taking duals corresponding to the involution $*$ on $\langle R\rangle$, and the multiplication in $R_+(H)$ is given by
\begin{equation}\label{eq.free} 
	a_xa_y = \sum_{x=vg,y=g^*w}(a_{vw}+a_{v\circ w}). 
\end{equation} The convention is that in the right hand side of this equality, the term $a_{v\circ w}$ is absent whenever $v\circ w=\emptyset$, and \define{free on $(R,*)$} means free on $(R,*,\circ)$ with $\circ$ constantly equal to $\emptyset$. 

`Free' period, unadorned by any data $(R,*,\circ)$, means that there is some such data which makes the semiring free in the sense of the previous paragraph. The same terminology applies to fusion rings. 
\end{de}

For Hopf algebras with free fusion semirings we usually identify simple comodules with words over the alphabet $R$, and denote the coalgebras $C_{a_v}$ by $C_v$ for words $v\in\langle R\rangle$.

\begin{re}\label{rem.prods_dominated_by_prods}
Keeping the notations from \Cref{def.free}, note that if $x=x't$ and $y=t^*y'$, then in the fusion semiring with its usual order we have $a_{x'}a_{y'}\le a_xa_y$. This follows from an application of \Cref{eq.free} to $a_{x'}a_{y'}$: 
\[
	a_{x'}a_{y'} = \sum_{x'=vh,y'=h^*w}(a_{vw}+a_{v\circ w}) = \sum_{x=vht,y=(ht)^*w}(a_{vw}+a_{v\circ w}),
\] 
the summations being over words $h\in\langle R\rangle$. In other words, in \Cref{eq.free} we are, roughly speaking, summing over all words $g$ ``common'' to $x$ and $y$ in some sense, whereas here we are summing only over those words among them which contain $t$ as as right hand segment. But then the resulting sum is clearly contained term by term in the right hand side of \Cref{eq.free}. 
\end{re}

\begin{re}\label{rem.free}
It will come in handy below to notice that a fusion ring free on $(R,*,\circ)$ in the sense of \Cref{def.free} is in fact free as a ring on the set $R$, i.e. it is a non-commutative polynomial ring in the variables $a_r$, $r\in R$. This can be shown by filtering the ring by lengths of words, and observing that \Cref{eq.free} says that $a_xa_y=a_{xy}$ up to shorter words. The argument is spelled out in \cite[3.2]{MR2917093}. 

One consequence of all of this is that a cosemisimple Hopf algebra with free fusion semiring cannot have non-trivial one-dimensional comodules, or equivalently, non-trivial grouplike elements; indeed, a one-dimensional comodule would correspond to an invertible element of the fusion ring, but the only invertibles in a non-commutative polynomial ring (over $\bZ$) are $\pm 1$. We will need this in \Cref{se.simplicity}.   
\end{re}

Several important families of cosemisimple Hopf algebras (in fact CQG algebras) exhibit the kind of behaviour axiomatized by \Cref{def.free}. We now recall some of these.

\begin{ex}\label{ex.free}
Let $Q$ be a positive self-adjoint $n\times n$ matrix, and $A_u(Q)$ the $*$-algebra freely generated by the $n^2$ elements $u_{ij}$, $i,j=\overline{1,n}$, subject to the relations demanding that both $u=(u_{ij})_{i,j}$ and $Q^{\frac 12}\overline uQ^{-\frac 12}$ be unitary as elements of $M_n(A_u(Q))$ (where $\overline u=(u_{ij}^*)_{i,j}$). 

$A_u(Q)$ can be made into a CQG algebra by declaring that $u_{ij}$ are the usual $n^2$ basis elements of a matrix coalgebra in the sense that \[ \Delta(u_{ij})=\sum_k u_{ik}\otimes u_{kj},\quad \varepsilon(u_{ij})=\delta_{ij} \] (a theme that will come up again and again in these examples; they are all defined by imposing relations on the $n^2$ matrix units of an $n\times n$ matrix coalgebra). 

The algebras $A_u(Q)$ were introduced by Wang and van Daele in \cite{MR1382726}, and they are the quantum analogues of unitary groups: Every finitely generated CQG algebra is a quotient of one of them, meaning, in dual language, that every ``compacy quantum Lie group'' embeds in the compact quantum group associated to $A_u(Q)$ for some $Q$.

When $n\ge 2$ (which will always be assumed), it was shown in \cite{MR1484551} that the fusion semiring of $A_u(Q)$ is free on $(\{\alpha,\alpha^*\},*)$, where $\alpha$ is the class of the fundamental representation corresponding to the matrix coalgebra spanned by $u$; it has a basis $e_i$, $i=\overline{1,n}$ and comodule structure defined by $e_j\mapsto \sum_i e_i\otimes u_{ij}$. Recall from \Cref{def.free} that this means $\circ$ is trivial, i.e. constantly $\emptyset$. 
\end{ex}

\begin{ex}\label{ex.ortho}
Now let $Q$ be a matrix with the property $Q\overline{Q}\in \bR I_n$ (just like the positivity assumption of the previous example, this avoids some redundance and degeneracy). The CQG algebra $B_u(Q)$ was also introduced in \cite{MR1382726} (denoted there by $A_o(Q)$) as the $*$-algbra obtained from $A_u(Q)$ by imposing the additional assumption that all $u_{ij}$ be self-adjoint. 

The fusion semiring of $B_u(Q)$ is determined in \cite{MR1378260}, and is isomorphic to that of $SU(2)$ whenever $n\ge 2$: free on $(\{\alpha\},*)$ for the fundamental representation $\alpha=\alpha^*$ defined as in the previous example. Again, $\circ$ is trivial.  
\end{ex}

\begin{ex}\label{ex.auto}
In \cite{MR1637425}, Wang introduced the quantum automorphism group $A^{aut}(B,\tau)$ of a traced finite-dimensional $C^*$-algebra $(B,\tau)$, meaning the initial object in the category of CQG algebras endowed with a coaction on $B$ that preserves the trace $\tau$ in the appropriate sense. If the trace $\tau$ is chosen judiciously, the fusion semiring of $A^{aut}$ is isomorphic to that of $SO(3)$ (as shown in \cite{MR1709109}). This means in particular that it is free on $(\{\alpha\},*,\circ)$, where $\alpha=\alpha^*$ and $\alpha\circ\alpha=\alpha$. 
\end{ex}

\begin{ex}\label{ex.reflection}
The quantum reflection groups of \cite{MR2511633} are another important family of examples. For a positive integer $n$ and an additional parameter $s$ which is either a positive integer or $\infty$, the quantum reflection group $A_h^s(n)$ is defined as the $*$-algebra freely generated by the $n^2$ generators $u_{ij}$, satisfying the conditions: 
\begin{enumerate}
\item $u$ and $\overline u$ are unitary.
\item All $u_{ij}$ are partial isometries, in the sense that $p_{ij}=u_{ij}u_{ij}^*$ are idempotent. 
\item $u_{ij}^s=p_{ij}$. 
\end{enumerate}
This is motivated by the fact that imposing the additional condition of commutativity would result in the algebra of functions on the group of $n\times n$ monomial matrices (i.e. one non-zero entry in each row and column) with $s$'th roots of unity as non-zero entries. 

In this setting (and assuming $n\ge 4$), \cite[7.3]{MR2511633} says that the fusion semiring of $A_h^s(n)$ is free on $(\bZ/s\bZ,*,\circ)$, where $x^*=-x$ and $x\circ y=x+y$ for $x,y\in\bZ/s\bZ$.  
\end{ex}

\begin{ex}\label{ex.free_product}
Given a family $H_i$ of cosemisimple Hopf algebras whose fusion semirings are free on $(R_i,*_i,\circ_i)$ for some index set $I\ni i$, their coproduct $H$ in the category of Hopf algebras is again cosemisimple, and its fusion semiring is free on $\left(R=\bigsqcup R_i,*,\circ\right)$ where this latter $*$ consists simply of putting together the involutions $*_i$ of the individual $R_i$'s, while for $r,s\in R$, $r\circ s$ is $r\circ_is$ if $r,s\in R_i$, and $\emptyset$ otherwise. 

Most of this follows for example from \cite[1.1]{MR1316765}, which basically says that the fusion semiring of the coproduct (there called the \define{free product} of the corresponding compact quantum groups) is the coproduct in the category of semirings of the $R_+(H_i)$. Although that paper is concerned with CQG algebras, these results extend easily to arbitrary cosemisimple Hopf algebras.
 
It will be worth our while to take somewhat of a detour, and go into the details of what happens to fusion rules when passing to coproducts. A note on terminology: In the sequel, `free product' refers to quantum groups (i.e. objects dual to Hopf algebras), whereas in the dual picture, when talking about algebras or Hopf algebras, we use `coproduct'. 
 
According to \cite{MR1316765}, the simples in $\cM^H$ are the tensor products $V_1\otimes\ldots\otimes V_n$ for all finite strings $(i_1,\ldots,i_n)$ of elements from $I$ such that $i_k\ne i_{k+1}$, $\forall k$ and all choices of non-trivial simples $V_k\in\cM^{H_{i_k}}$. The way to describe the fusion rules of $H$ briefly is to say that the canonical inclusions $\iota_i:R_+(H_i)\to R_+(H)$ make the right hand side the coproduct of the $R_+(H_i)$'s in the category of semirings. 

To be a bit more explicit, write the generic simple of $\widehat{H}\subset R_+(H)$ as $z=x_1\ldots x_n$ (strings meaning products, as announced above). Then, when computing its product with another such element, say $z'=y_1\ldots y_\ell$ ($y_k\in\widehat{H_{j_k}}$), three cases can occur: 

(i) If $i_n\ne j_1$, then $zz'$ is simply the stringing together of the $x$'s and $y$'s. 

(ii) If $i_n=j_1$ but $x_{i_n}$ is not the dual of $y_{j_1}$, then 
\begin{equation}\label{eq.fusion_free_prod2}
zz' = \sum_k n_k\ x_1\ldots x_{n-1}v_k y_2\ldots y_\ell,
\end{equation} 
where \[x_ny_1 = \sum_k n_k v_k,\quad n_k\in\bZ_+,\quad v_k\in\widehat{H_{i_n}}.\] 

(iii) Finally, if $x_n=y_1^*$ and \[x_ny_1 = 1 + \sum_k n_kv_k\] in $R_+(H_{i_n})$, then the product can be computed by recursion, with 
\begin{equation}\label{eq.fusion_free_prod3}
zz' = (x_1\ldots x_{n-1})(y_2\ldots y_\ell) + \sum_k n_k\ x_1\ldots x_{n-1}v_k y_2\ldots y_\ell
\end{equation} 
as the first step of that recursion.   

We leave the reader the task of applying this analysis to the case when $H_i$ have free fusion semirings on $(R_i,*_i,\circ_i)$, to obtain the claims made about the freeness of $R_+(H)$.
\end{ex}


\subsection{Normal quantum subgroups}\label{subse.normal}

In the setting we are placing ourselves in, with Hopf algebras regarded as analogues of algebras of functions on groups, a normal quantum subgroup of a quantum group should correspond to a \define{quotient} Hopf algebra with some extra properties. We recall what these are, following \cite{MR1334152,MR2504527}.

\begin{de}\label{def.normal}
A morphism $\pi:H\to K$ of Hopf algebras is \define{normal} if the left and right kernels of $\pi$ defined respectively by
\[ \lker(\pi) = \{h\in H\ |\ \pi(h_1)\otimes h_2 = 1_K\otimes h\} \]
and
\[ \rker(\pi) = \{h\in H\ |\ h_1\otimes \pi(h_2) = h\otimes 1_K\} \]
coincide. 

For a quantum group (dual to the Hopf algebra) $H$, a \define{normal quantum subgroup} is a normal Hopf algebra surjection $\pi:H\to K$. The same discussion applies to CQG algebras, in which case all morphisms are always understood to be $*$-preserving. 
\end{de}

This is the definition adopted in \cite[2.1]{MR2504527} (see also \cite[1.1.5]{MR1334152}), and it is motivated by the fact that for Hopf algebras of representative functions on compact groups, it recovers the usual notion of normality (with the left and right kernels being algebras of functions on the right and left coset spaces respectively). Note that the map $\pi$ itself is regarded as the subgroup; two maps are always taken to represent the same subgroup whenever they are isomorphic as $H$-sourced arrows in the category of Hopf algebras (or CQG algebras). 

In the situation of \Cref{def.normal}, $P=\lker(\pi)=\rker(\pi)$ is a Hopf subalgebra of $H$, which can be thought of, morally, as functions on the quotient of the quantum group $H$ by the quantum subgroup $K$; we denote it by $\hker(\pi)$, and refer to it as the \define{Hopf kernel} of $\pi$. In all cases we deal with in this paper (where all Hopf algebras are cosemisimple), $\iota:P\to H$ and $\pi:H\to K$ are the two halves of what in \cite[1.2]{MR1334152} is called a \define{exact sequence} of quantum groups, and $\iota$ and $\pi$ in fact determine one another (see e.g. \cite[4.4]{MR2504527}). 

Consider the scalar corestriction functor $\cM^H\to\cM^K$ through $\pi$, turning an $H$-comodule $V$ with structure map $v\mapsto v_0\otimes v_1$ into a $K$-comodule via $v\mapsto v_0\otimes\pi(v_1)$. The Hopf kernel $\iota:P\to H$ of $\pi$ can now be characterized in the cosemisimple case as the sum of precisely those matrix subcoalgebras of $H$ whose corresponding simple comodules become trivial upon applying this functor (meaning they break up as direct sums of copies of the trivial one-dimensional comodule). 

As it turns out, there is a simple necessary and sufficient condition which will ensure that an inclusion $\iota:P\to H$ of (cosemisimple) Hopf algebras is half of an exact sequence $P\to H\to K$: It is that $P$ be invariant under either the left or the right adjoint action of $H$ on itself, defined by $h\triangleright h'=h_1h'S(h_2)$ and $h'\triangleleft h=S(h_1)h'h_2$ respectively (in which case we say that $P$ is \define{ad-invariant}). Indeed, one can show easily that left (right) kernels of Hopf algebra morphisms are always invariant under the right (resp. left) adjoint action; \cite[1.2.5]{MR1334152} ensures the opposite implication as soon as $H$ is faithfully flat over $P$, which according to \cite{2011arXiv1110.6701C} is always the case. 

Note also that starting with the ad-invariant Hopf subalgebra $P\le H$, the third term $K$ in the exact sequence $P\to H\to K$ (which is nothing but $H/HP^+$, where $P^+=\ker(P|_K)$) is automatically cosemisimple, also by \cite{2011arXiv1110.6701C}. In conclusion, knowledge of the ad-invariant Hopf subalgebra $\iota:P\to H$ is equivalent to giving a linearly reductive normal quantum subgroup $\pi:H\to K$, and we will often conflate these two points of view. Moreover, the correspondence $P\leftrightarrow K$ is size-reversing, in the sense that larger quotient Hopf algebras $K$ correspond to smaller Hopf subalgebras $P$.

\begin{re}\label{rem.nonzero}
We take a moment to observe that the (left, say) adjoint action of a matrix subcoalgebra $C$ of a Hopf algebra $H$ on another such subcoalgebra $D$ is non-zero. Indeed, This follows from the fact that $H\ni 1\in S(C)C$, and hence $D\le S(C)\triangleright(C\triangleright D)$. We need this below. 
\end{re}

Now that we know what normal quantum subgroups are, the following is natural:

\begin{de}\label{def.simple}
A quantum group with underlying Hopf algebra $H$ is \define{simple} if there are no normal quantum subgroups apart from the obvious ones, $\id:H\to H$ and $\varepsilon:H\to k$. 
\end{de}

\begin{re}\label{rem.def_comparison}
The notion from \cite{MR2504527} that best matches \Cref{def.simple} (in the case of compact quantum groups) is that of a so-called \define{absolutely} simple compact quantum group. The two definitions are still somewhat different though: Apart from not having any non-obvious normal quantum subgroups, there are additional requirements in \cite[3.3]{MR2504527} that the compact quantum group (a) be what is usually called a \define{matrix} CQG (in dual language, this amounts to the corresponding Hopf algebra being finitely generated as an algebra), (b) be \define{connected} (in the context of compact quantum groups, this means essentially that every non-trivial Hopf $*$-subalgebra is infinite-dimensional), and (c) have no non-trivial irreducible one-dimensional representations (i.e. comodules for the corresponding Hopf algebra). 

Conditions (b) and (c) hold automatically for the examples obtained below via \Cref{th.main}, all of whose fusion semirings are embedded into free ones. Indeed, the free fusion rule \Cref{eq.free} make it clear that any non-trivial Hopf subalgebra of such a Hopf algebra has infinitely many simple comodules, proving (b); on the other hand, \Cref{rem.free} ensures that (c) holds for Hopf algebras with free fusion semirings and their Hopf subalgebras.

As pointed out in \cite{chirvasitu:123509}, the examples covered by \Cref{th.main} will not, in general, satisfy condition (a). Other examples that do go the extra mile can be obtained by judiciously selecting finitely generated Hopf subalgebras (e.g. as in \cite[Proposition 10]{chirvasitu:123509}), but this is somehow less natural than the cocenter construction of the next section. All of this seemed to justify choosing the shorter definition over the longer, more refined one, as well as dropping the adjective `absolutely'. 
\end{re}


\section{Centers and cocenters}\label{se.center}

This section is devoted to defining the notion of center for a quantum group (regarded, as always in this paper, as the object dual to a cosemisimple Hopf algebra) and providing a characterization that will come in handy in \Cref{se.simplicity}. Roughly speaking, the center is, as expected, ``precisely the quantum subgroup which acts as scalars on any simple comodule''. Making this precise will require a bit of unpacking, but the idea is simple enough. The starting point is the result \cite[3.1]{MR2130607}, according to which for a compact group $G$, the character group of its center $Z(G)$ can be recovered from combinatorial data that can be read off from the fusion ring of $G$, or alternatively from its category of representations. Taking M\"uger's result as a guiding principle, we will \define{define} the center of a linearly reductive quantum group via the perfectly analogous process. The details follow.

We begin with the notion of centrality for a quantum subgroup $\pi:H\to K$ of a quantum group $H$, as introduced for instance in the course of the proof of \cite[4.5]{MR2504527}:

\begin{de}\label{def.central}
A morphism $\pi:H\to K$ of cosemisimple Hopf algebras is \define{central} if 
\begin{equation}\label{eq.central} 
	(\pi\otimes\id)\circ\Delta_H=(\pi\otimes\id)\circ\tau\circ\Delta_H: H\to K\otimes H, 
\end{equation} 
where $\tau$ is the usual flip of tensorands. 
\end{de}

\begin{re}
Note that unlike \cite{MR2504527} or \cite{2013arXiv1305.1080P} (where centrality and normal quantum subgroups also figure prominently), \Cref{def.central} does not demand that central morphisms be surjective. If $\pi:H\to K$ does happen to be both central and surjective, then $K$ is cocommutative, and hence a group algebra by cosemisimplicity. In addition, as noted in \cite[4.5]{MR2504527}, centrality implies normality. 
\end{re}

\begin{re}
Morphisms as in \Cref{def.central} are sometimes called `cocentral' (e.g. \cite[Definition 3.1]{2012arXiv1205.6110A}). The name `central' seems appropriate here, given the point of view we have adopted that Hopf algebras are to be regarded as function algebras.  
\end{re}

The following result ties in the definition with the representation-theoretic interpretation of centrality alluded to in the first paragraph of this section.

\begin{proposition}\label{pr.alt_char_centr}
A morphism $\pi:H\to K$ of cosemisimple Hopf algebras is central in the sense of \Cref{def.central} if and only if the associated scalar corestriction functor $\mathcal M^H\to\mathcal M^K$ turns every simple $H$-comodule into a direct sum of copies of a single one-dimensional $K$-comodule. 
\end{proposition}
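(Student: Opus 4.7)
My plan is to translate both conditions into explicit statements about matrix coefficients of the simple subcoalgebras of $H$, and show they match via straightforward linear algebra.

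Fix a simple $H$-comodule $V$ with coefficient subcoalgebra $C = C_V \le H$. Choose a basis $e_1,\dots,e_n$ of $V$ and let $c_{ij} \in C$ be the corresponding matrix coefficients, so that $e_j \mapsto \sum_i e_i \otimes c_{ij}$, $\Delta_H(c_{ij}) = \sum_k c_{ik}\otimes c_{kj}$, and the $c_{ij}$ form a basis for $C$ (and are linearly independent in $H$). The corestricted $K$-comodule structure on $V$ is then $e_j\mapsto\sum_i e_i\otimes\pi(c_{ij})$, so the statement ``$V$ is a direct sum of copies of a single one-dimensional $K$-comodule $g$'' translates, after a change of basis in $V$, to the condition $\pi(c_{ij}) = \delta_{ij}\, g$ for some grouplike $g\in K$ (in any basis for $V$, since the associated matrix is scalar and hence basis-independent).

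For the forward direction, I apply the centrality identity \Cref{eq.central} with $h=c_{ij}$ to obtain
\[
\sum_k \pi(c_{ik})\otimes c_{kj} \;=\; \sum_k \pi(c_{kj})\otimes c_{ik}\qquad\text{in } K\otimes H.
\]
Both sides lie in $K\otimes C$, and comparing coefficients against the basis $\{c_{ab}\}$ of $C$ forces $\pi(c_{ij})=0$ whenever $i\ne j$, and $\pi(c_{11})=\cdots=\pi(c_{nn})=:g$. A one-line check using $\Delta_K\circ\pi=(\pi\otimes\pi)\circ\Delta_H$ and $\varepsilon_K\circ\pi=\varepsilon_H$ confirms $\Delta_K(g)=g\otimes g$ and $\varepsilon_K(g)=1$, so $g$ is grouplike. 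This gives the desired form of the corestriction.

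For the converse, if each simple subcoalgebra $C\le H$ satisfies $\pi(c_{ij})=\delta_{ij}g_C$ in some (hence every) matrix-coefficient basis, then substituting shows both sides of \Cref{eq.central} equal $g_C\otimes c_{ij}$ for $h=c_{ij}$. By linearity and cosemisimplicity (the $c_{ij}$ ranging over all simple $C\le H$ span $H$), the centrality identity holds on all of $H$. The only nontrivial ingredient is the coefficient comparison in the forward direction; everything else is bookkeeping.
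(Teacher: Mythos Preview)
Your proof is correct and complete, but it takes a different route from the paper's. The paper argues more conceptually: it rewrites centrality as the condition that every functional $\varphi\in K^*$ acts on each $H$-comodule $V$ (via $v\mapsto\varphi(\pi(v_1))v_0$) by an $H$-comodule endomorphism, and then invokes Schur's lemma to conclude that on a simple $V$ this action factors through an algebra map $K^*\to k$, which must be evaluation at a grouplike $\gamma\in K$. Your argument, by contrast, works directly at the level of matrix coefficients: you compare the two sides of \Cref{eq.central} on the basis $\{c_{ij}\}$ of a simple subcoalgebra, and a linear-independence argument in the second tensor factor immediately yields $\pi(c_{ij})=\delta_{ij}g$. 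Your approach is more elementary (no Schur, no duals, no talk of module actions), and the coefficient comparison is a perfectly clean substitute for the representation-theoretic reasoning. The paper's approach has the advantage of making the conceptual content visible---centrality literally means ``$K^*$ acts by intertwiners''---which is a useful viewpoint elsewhere, but for the purposes of the proposition either argument is fine.
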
 
\begin{proof}
Since linear functionals $\varphi\in K^*$ separate the elements of $K$, \Cref{eq.central} can be stated as 
\begin{equation}\label{eq.central_bis}
	\varphi(\pi(h_1))h_2 = \varphi(\pi(h_2))h_1,\ \forall h\in H,\ \varphi\in K^*. 
\end{equation}
In turn, this is equivalent to the commutativity of all diagrams of the form 
\[
 \begin{tikzpicture}[anchor=base]
   \path (0,0) node (1) {$V$} +(3,0) node (2) {$V\otimes H$} +(0,-2) node (3) {$V$} +(3,-2) node (4) {$V\otimes H$}; 
   \draw[->] (1) -- (2);
   \draw[->] (3) -- (4);
   \draw[->] (1) -- (3) node[pos=.5,auto,swap] {$\scriptstyle \varphi$};
   \draw[->] (2) -- (4) node[pos=.5,auto] {$\scriptstyle \varphi\otimes\id$};
 \end{tikzpicture}
\]  
for all choices of a comodule $V\in\cM^H$ and a functional $\varphi\in K^*$, where the vertical maps represent the action of $\varphi$ on $V$ by $\varphi\triangleright v=\varphi(\pi(v_1))v_0$. Indeed, the right-down path in the diagram sends $v$ to $v_0\otimes \varphi(\pi(v_1))v_2$, whereas the down-right path sends it to $v_0\otimes \varphi(\pi(v_2))v_1$. 

In conclusion, the centrality of \Cref{def.central} is equivalent to $K^*$ acting by $H$-comodule maps on all $H$-comodules, or equivalently (by cosemisimplicity) on all simple $H$-comodules. But for simple $V\in\cM^H$, Schur's lemma makes this latter condition equivalent to $K^*$ acting on $V$ by $\varphi\triangleright v=\Psi(\varphi)(v)$ for some algebra homomorphism $\Psi:K^*\to k$. 

Since the $K^*$-module structure is induced by a $K$-comodule structure, in the situation described above $\Psi$ must be of the form $\varphi\mapsto \varphi(\gamma)$ for some $\gamma\in K$. The fact that $\Psi$ is an algebra map translates to $\gamma$ being a grouplike, and hence the $K$-comodule $V$ breaks up as a direct sum of $\dim V$ copies of the one-dimensional $k\gamma$-comodule.  
\end{proof}

We recall the following standard notion, for future reference:

\begin{de}\label{def.grading}
For a cosemisimple Hopf algebra $H$ and a group $\Gamma$, a \define{$\Gamma$-grading} of the fusion semiring $R_+=R_+(H)$ is a splitting $R_+=\bigoplus_{\gamma\in\Gamma}R_+^\gamma$ as additive abelian groups which respects the multiplicative structure,
\[
	R_+^\gamma R_+^{\gamma'}\subseteq R_+^{\gamma\gamma'},\ \forall \gamma, \gamma'\in\Gamma,
\]
and also the basis of irreducibles, in the sense that each $x\in\widehat{H}$ belongs to some $R_+^\gamma$. We will then say (as usual for rings graded by groups) that $x$ is homogeneous of degree $\gamma$ and write $\deg(x)=\gamma$. 

Since the entire structure consists of assigning elements of $\Gamma$ to simple $H$-comodules, we can also rephrase all of the above as a map $\deg:\widehat{H}\to\Gamma$ satisfying $\deg(x)=\deg(x_1)\ldots\deg(x_n)\in\Gamma$ whenever $x\le x_1\ldots x_n$ in $R_+$ (note that it suffices to impose this condition for $n=2$).  
\end{de}

\begin{re}\label{rem.tmap}
Note that a map as in the last paragraph of the definition is what M\"uger calls a \define{t-map} in \cite[2.1]{MR2130607} in the case of compact groups. 
\end{re}

Now let $\pi:H\to K$ be a central quantum subgroup. According to \Cref{pr.alt_char_centr}, every simple $V\in\cM^H$ can be written, when regarded as a $K$-comodule, as a direct sum of $\dim V$ copies of a $K$-comodule corresponding to some grouplike element $\gamma\in K$. Moreover, the surjectivity of $\pi$ means that $K$ is in fact a group algebra $k[\Gamma]$. Labelling $V$ by $\gamma$ is easily seen to define $\Gamma$-grading on the fusion semiring $R_+(H)$ in the sense of \Cref{def.grading}.

Conversely, suppose we have a grading of $R_+(H)$ by a group $\Gamma$. Then we can recover a central quantum subgroup $\pi:H\to k[\Gamma]$ as follows: For a simple $V\in\cM^H$ assigned degree $\gamma\in \Gamma$ by the grading, fix some basis $\{e_i\}$, $i=\overline{1,n}$, consider the corresponding matrix units $u_{ij}$ determined by $(e_j)_0\otimes (e_j)_1=\sum e_i\otimes u_{ij}$, and define the restriction of $\pi$ to $C_V$ by $u_{ij}\mapsto\delta_{ij}\gamma$. We leave to the reader the task of checking that this is a well-defined morphism of Hopf algebras (or CQG algebras, if $H$ is a CQG algebra and $\bC[\Gamma]$ is given its standard CQG structure making the elements of $\Gamma$ unitary). To check independence of the choice of basis $\{e_i\}$, for example, notice that a different choice would conjugate $u=(u_{ij})\in M_n(H)$ by a scalar $n\times n$ invertible matrix, and such a conjugation fixes the matrix $(\delta_{ij}\gamma)_{i,j}\in M_n(k[\Gamma])$ componentwise.

\begin{de}\label{def.gradings}
Let $H$ be a cosemisimple Hopf algebra. The \define{category of gradings} of $R_+(H)$ has as objects gradings of $R_+(H)$ by groups such that the simples are homogeneous, and as morphisms between a $\Gamma_1$ and a $\Gamma_2$-grading, morphisms of groups $f:\Gamma_1\to \Gamma_2$ respecting the grading, in the sense that if the simple $V$ has degree $\gamma_1\in \Gamma_1$ with respect to the first grading, then it has degree $f(\gamma_1)\in \Gamma_2$ with respect to the second one. 

The \define{category of central arrows} of $H$ has central arrows $H\to k[\Gamma]$ as objects, and the obvious commutative triangles as morphisms. 
\end{de}

\begin{re}\label{rem.grading}
Note that for any grading of $R_+(H)$ by a group $\Gamma$ as in the preceding discussion, the trivial comodule automatically gets degree $1$, and hence so do products $xx^*$ for $x\in\widehat{H}$. If $x$ has, say, degree $\gamma\in \Gamma$, $x^*$ will have degree $\gamma^{-1}$. In conclusion, the grading intertwines the operations $*$ and $\gamma\mapsto \gamma^{-1}$.    
\end{re}

With these preparations, the discussion preceding \Cref{def.gradings} can now be summarized as follows:

\begin{proposition}\label{pr.gradings}
For any cosemisimple Hopf algebra $H$, sending a central arrow $H\to k[\Gamma]$ to the grading described before \Cref{def.gradings} sets up an equivalence between central arrows of $H$ and gradings of $R_+(H)$.\qedhere  
\end{proposition}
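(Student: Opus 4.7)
The plan is to spell out the two constructions sketched before \Cref{def.gradings} and verify they form mutually inverse functors. The non-trivial direction sends a $\Gamma$-grading to the central arrow $\pi:H\to k[\Gamma]$ defined on each matrix subcoalgebra $C_V$ (for $V$ a simple of degree $\gamma$) by sending the matrix $u=(u_{ij})\in M_n(H)$ associated to any basis of $V$ to the scalar matrix $\gamma I_n\in M_n(k[\Gamma])$. Well-definedness (basis-independence) is automatic since $\gamma I_n$ is central in $M_n(k[\Gamma])$ and hence fixed under the conjugation by $GL_n(k)$ that a change of basis induces. The coalgebra axioms for $\pi$ reduce to $\Delta(\gamma)=\gamma\otimes\gamma$ and $\varepsilon(\gamma)=1$, both trivial.

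The principal technical point is multiplicativity. For simples $V,W$ with $\deg V=\gamma_V$, $\deg W=\gamma_W$, I would check that $\pi(u^V_{ij}u^W_{kl}) = \delta_{ij}\delta_{kl}\gamma_V\gamma_W = \pi(u^V_{ij})\pi(u^W_{kl})$. For the left-hand side, in the tensor basis $\{e^V_i\otimes e^W_k\}$ of $V\otimes W$ the product $u^V_{ij}u^W_{kl}$ is precisely the matrix unit of $C_{V\otimes W}=C_VC_W$ indexed by the pair $\bigl((i,k),(j,l)\bigr)$. In a basis of $V\otimes W$ adapted to the decomposition $V\otimes W=\bigoplus V_a$ into simples (each of degree $\gamma_V\gamma_W$ by the grading axiom), $\pi$ assembles into the block-diagonal scalar matrix $\gamma_V\gamma_W I$; changing basis back to the tensor basis conjugates this matrix, which leaves it unchanged, and the $(i,k),(j,l)$ entry is $\delta_{ij}\delta_{kl}\gamma_V\gamma_W$ as required. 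Centrality of $\pi$ then follows from \Cref{pr.alt_char_centr}, since each simple $V$ of degree $\gamma$ becomes a direct sum of $\dim V$ copies of the one-dimensional $k[\Gamma]$-comodule spanned by $\gamma$.

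It remains to check the round trips and morphisms. Starting from a grading, building $\pi$, and reading off the degrees from the induced $k[\Gamma]$-coactions returns the same grading by construction. Conversely, given a central $\pi$, extracting a grading via \Cref{pr.alt_char_centr} and rebuilding $\pi'$ yields $\pi'=\pi$: on each simple $V$ of degree $\gamma$ there is a basis in which the $k[\Gamma]$-coaction coming from $\pi$ is the scalar matrix $\gamma I$, which is exactly the value $\pi'$ assigns, and basis-independence finishes the job. For functoriality, a morphism of central arrows $\pi_1\to\pi_2$ is a Hopf algebra map $\alpha:k[\Gamma_1]\to k[\Gamma_2]$ with $\alpha\pi_1=\pi_2$; since grouplikes are sent to grouplikes, $\alpha$ is induced by a group homomorphism $f:\Gamma_1\to\Gamma_2$, and commutativity of the triangle restricted to each $C_V$ amounts to $\deg_2(V)=f(\deg_1(V))$, i.e., to $f$ being a morphism of gradings. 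The correspondence is manifestly reversible, completing the equivalence.
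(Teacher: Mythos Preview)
Your proposal is correct and follows exactly the approach the paper sketches in the paragraphs preceding \Cref{def.gradings}; the paper itself states the proposition with only a \textsc{qed} symbol, having explicitly left to the reader the verification that the map $u_{ij}\mapsto\delta_{ij}\gamma$ is a well-defined Hopf algebra morphism. Your argument supplies precisely those omitted details (basis-independence, multiplicativity via the block-scalar trick on $V\otimes W$, the two round trips, and the identification of morphisms on both sides), so there is nothing to compare beyond noting that you have filled in what the paper intentionally elided.
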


Paraphrasing M\"uger's result \cite[3.1]{MR2130607}, mentioned briefly at the beginning of the current section, for a compact group $G$ with fusion semiring $R_+$, the category of gradings of $R_+$ has an initial object, and the corresponding group implementing this universal grading is precisely the character group of the center $Z(G)$. 

It is sensible now, in view of the result we've just cited and of \Cref{pr.gradings}, to try to define the center of the quantum group $H$ as the largest central quantum subgroup (i.e. initial object in the category of central arrows). \Cref{pr.gradings} allows us to do just that, by first observing that the category of gradings of $R_+(H)$ has an initial object. This universal grading has appeared, for example, in \cite{MR2097019,MR2130607,MR2383894}, sometimes in slightly different variants which can easily be adapted to the present situation; we recall several descriptions below.

\begin{de}\label{def.center_cocenter}
Let $H$ be a cosemisimple Hopf algebra. The \define{center} of the corresponding quantum group is the initial object $\pi:H\to Z$ of the category of central arrows. 

The \define{cocenter} is the Hopf kernel of the center $\pi$. 
\end{de}

Following \cite{MR2097019,MR2130607}, we denote the group implementing the initial grading of $R_+(H)$ by $\cC(H)$ (for `\define{chain} group', the term used in these papers for the object, in the context of compact groups). The two papers are concerned with (ordinary) compact groups, but the results of \cite{MR2130607} whose analogues we are interested in generalize quite easily, and we won't repeat the proofs. Instead, we content ourselves with giving the several descriptions of $\cC(H)$, and switch between them freely afterwards.   

A description of $\cC(H)$ analogous to that in \cite{MR2097019} would be as the set of equivalence classes of $\widehat H$, where $x,y\in \widehat H$ are declared equivalent (written $x\sim y$) whenever $x,y\le z_1\ldots z_n$ in $R_+(H)$ for some $z_j\in\widehat H$. Then all simple constituents of $xy$ are equivalent, so a multiplication operation descends to the set $\widehat{H}/_\sim$ of equivalence classes. It turns out this multiplication makes $\widehat{H}/_\sim$ into a group, with inverse given by taking duals, and the desired universal grading is $\widehat{H}\to\widehat{H}/_\sim$. 

Additionally, let $(\Gamma(H),\cdot)$ be the monoid generated by $\widehat{H}$, subject to relations $x\cdot y=z$ whenever $z\le xy\in R_+(H)$. $\Gamma(H)$ is in fact a group with inverse operation $x\mapsto x^*$, the map $\widehat{H}\to\Gamma(H)$ sending an irreducible to the corresponding generator of $\Gamma(H)$ is, once more, the universal grading. 

We refer the reader to the short paper \cite{MR2130607} for proofs of (most of) the claims. Although that paper does not explictly mention gradings, they are there in the guise of the t-maps mentioned in \Cref{rem.tmap}, via \Cref{def.gradings}. See also \cite{2013arXiv1305.1080P} for a somewhat different take on centers of compact quantum groups. Proposition 7.5 of that paper similarly identifies various possible descriptions of the center.

\begin{re}\label{rem.nat_induced_center_map}
We make here two simple observations, that will be used repreatedly (sometimes implicitly) below. 

First, for an inclusion $H'\le H$ of cosemisimple Hopf algebras, the composition $H'\le H\to K$ is central whenever the right hand arrow $H\to K$ is central. This follows immediately from \Cref{pr.alt_char_centr} and the fact that scalar corestriction through inclusions are fully faithful, and hence turn simples into simples. 

Secondly, inclusions $H'\le H$ also induce a group homomorphism $\cC(H')\to \cC(H)$. Indeed, by the previous observation, the composition $H'\to H\to k[\cC(H)]$ is central. But then, by the universality of $\cC(H')$, this composition must factor through $k[\cC(H')]\to k[\cC(H)]$ for a unique group morphism $\cC(H')\to \cC(H)$. 
\end{re}

\begin{re}
Because we are interested mainly in cosemisimple Hopf algebras, \Cref{def.center_cocenter} produces the largest central \define{linearly reductive} quantum subgroup. Any Hopf algebra also has a largest central quotient Hopf algebra, dually to \cite[Definition 2.2.3]{MR1382474}. Applying this construction to a cosemisimple Hopf algebra seems unlikely to result in a cosemisimple quotient however, hence the extra effort that goes into \Cref{def.center_cocenter}. 

For a CQG algebra $H$, however, the construction dual to \cite[2.2.3]{MR1382474} will indeed work: By the universality of the construction, the resulting quotient will be the largest among central $H$-quotients, but on the other hand it is automatically cosemisimple (and cocommutative, as all central quotients are), and hence a group algebra, because we are working over the algebraically closed field $\bC$. 
\end{re}

We now seek to describe the cocenter of a quantum group $H$ in terms of its fusion semiring. The preliminaries above almost take care of the first half of the following proposition, while the second half requires slightly more work and is needed in \Cref{se.simplicity}:

\begin{proposition}\label{pr.alt_descr}
Let $H$ be a cosemisimple Hopf algebra, and $\iota:P\to H$ its cocenter. The fusion semiring $R_+(P)$ can be described as either 

(a) The sub-semiring of $R_+(H)$ spanned by those $x\in\widehat H$ which are majorized by some product $(z_1\ldots z_n)(z_1\ldots z_n)^*\in R_+(H)$, $z_j\in\widehat H$ with respect to the usual partial order, or

(b) The sub-semiring of $R_+(H)$ generated by those $x\in\widehat H$ which are majorized by some $zz^*$, $z\in\widehat H$.
\end{proposition}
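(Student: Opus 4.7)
My plan is to treat parts (a) and (b) separately, building on the description of $R_+(P)$ as the span of those simples of degree $1$ under the universal grading $\deg\colon\widehat H\to\cC(H)$ recalled in the preliminaries.

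For (a), I would first check that the span $R_+^{(a)}$ of simples majorized by some $(z_1\cdots z_n)(z_1\cdots z_n)^*$ is a sub-semiring closed under $*$, via the inequality
\[
  (z_1\cdots z_n)(z_1\cdots z_n)^*(w_1\cdots w_m)(w_1\cdots w_m)^* \le (z_1\cdots z_n z_n^*\cdots z_1^* w_1\cdots w_m)(z_1\cdots z_n z_n^*\cdots z_1^* w_1\cdots w_m)^*,
\]
obtained by noting $uu^*\ge 1$ for every $u\in R_+(H)$. The containment $R_+^{(a)}\subseteq R_+(P)$ is a direct degree computation. For the reverse, the equivalence-class description of $\cC(H)$ gives that a degree-$1$ simple $x$ satisfies $x\sim 1$, so $x,1\le z_1\cdots z_n$ for some simples $z_j$; since $1\le z_1\cdots z_n$ forces $1\le (z_1\cdots z_n)^*$ (contragredience preserves multiplicity of the trivial), multiplying $x\le z_1\cdots z_n$ by $(z_1\cdots z_n)^*\ge 1$ places $x$ in $R_+^{(a)}$.

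For (b), the easy half $R_+^{(b)}\subseteq R_+(P)$ follows from the degree check on generators $x\le zz^*$. For the reverse inclusion, I would prove by induction on $n$ that every simple $\le (z_1\cdots z_n)(z_1\cdots z_n)^*$ lies in $R_+^{(b)}$. The factorization $(z_1\cdots z_n)(z_1\cdots z_n)^* = z_1\, E\, z_1^*$ with $E=(z_2\cdots z_n)(z_2\cdots z_n)^*$, together with the inductive hypothesis placing the simple constituents of $E$ in $R_+^{(b)}$, reduces the step to showing that $R_+^{(b)}$ is stable under conjugation $e\mapsto yey^*$ by any simple $y\in\widehat H$. Unpacking $e$ as a constituent of a product of $T$-generators and using $y^*y\ge 1$ to distribute via the bound $y(t_1\cdots t_m)y^*\le\prod_i(yt_iy^*)$, this further reduces to showing the simple constituents of $(yd)(yd)^*$ lie in $R_+^{(b)}$ for any simples $y,d$.

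The main obstacle is precisely this last claim: decomposing $yd = \sum m_i w_i$ into simples yields $(yd)(yd)^* = \sum_{i,j}m_im_j\,w_i w_j^*$, and while the diagonal contributions $w_i w_i^*$ lie in $T\subseteq R_+^{(b)}$ trivially, the off-diagonal terms $w_iw_j^*$ for distinct simples $w_i,w_j$ both occurring in $yd$ require more care. I would address them by running the induction jointly with the easily verified ad-invariance of $R_+^{(a)}$, which is immediate from the identity
\[
  y\,(z_1\cdots z_n)(z_1\cdots z_n)^*\,y^* \;=\; (y z_1\cdots z_n)(y z_1\cdots z_n)^*,
\]
allowing one to present each cross-term simple as a constituent of a shorter $(\prod)(\prod)^*$ expression to which the inductive hypothesis applies.
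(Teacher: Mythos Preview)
Your treatment of part (a) is fine and matches the paper's. The gap is in part (b), precisely at the point you flag as the ``main obstacle''.

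Your induction reduces everything to the following: for simples $y,d\in\widehat H$, every simple constituent of $(yd)(yd)^*$ lies in $R_+^{(b)}$. Decomposing $yd=\sum_i m_i w_i$ produces the off-diagonal pieces $w_iw_j^*$ with $w_i\ne w_j$, and you propose to handle these via the ad-invariance identity
\[
  y\,(z_1\cdots z_n)(z_1\cdots z_n)^*\,y^* \;=\; (yz_1\cdots z_n)(yz_1\cdots z_n)^*,
\]
claiming it lets you present each cross-term simple as a constituent of a \emph{shorter} $(\prod)(\prod)^*$. But the identity goes the wrong way: conjugation by $y$ \emph{increases} the length of the $(\prod)(\prod)^*$ presentation by one. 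All it tells you is that a simple in $w_iw_j^*$ lies in $R_+^{(a)}$, which you already knew; it does not bound the length of any such presentation, and in particular gives no access to the inductive hypothesis at $n-1$ (here $n-1=1$, i.e.\ the base case). As written, the argument is circular: you have reduced the general $n$ to $n=2$, and then have no independent proof of $n=2$.

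The paper avoids the off-diagonal problem altogether by a different mechanism: it observes that $H=\bigoplus_{y\in\widehat H} y\otimes y^*$, equipped with the right adjoint coaction, satisfies $z\otimes H\cong H\otimes z$ naturally in $z$ (Yetter--Drinfeld). Hence any simple $x\le (z_1\cdots z_n)(z_1\cdots z_n)^*$ is a summand of
\[
  (z_1\cdots z_{n-1})\otimes H\otimes (z_1\cdots z_{n-1})^* \;\cong\; (z_1\cdots z_{n-1})(z_1\cdots z_{n-1})^*\otimes H,
\]
so $x\le (z_1\cdots z_{n-1})(z_1\cdots z_{n-1})^*\,yy^*$ for some single simple $y$. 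This commutation moves the inner $z_nz_n^*$ (as a summand of $H$) past $(z_1\cdots z_{n-1})^*$ without ever breaking it into simples, thereby sidestepping your off-diagonal terms. You are missing exactly this idea (or an equivalent one), and the hand-wave to ad-invariance of $R_+^{(a)}$ does not substitute for it.
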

\begin{proof}
Recall first, from the discussion preceding \Cref{rem.nonzero}, that $\widehat{P}$ consists of precisely those ireducible comodules of $H$ which become trivial upon corestriction to the center of $H$, or, in other words, those irreducibles receiving degree $1$ in the universal grading. But since we've seen above that the universal grading is the quotient map $\widehat{H}\to\widehat{H}/\sim$, it follows that $R_+(P)\le R_+(H)$ is spanned by the simples $x\in\widehat H$ which are equivalent to $1$ through $\sim$. This means that $x$ and $1$ are majorized by some product $z_1\ldots z_n$ for $z_j\in\widehat H$, and hence $x\le (z_1\ldots z_n)(z_1\ldots z_n)^*$. Conversely, the latter condition on $x$ certainly implies $x\sim 1$, since both $x$ and $1$ are then summands of $(z_1\ldots z_n)(z_1\ldots z_n)^*$. 

The sub-semiring described in part (b) is a priori smaller than that of (a), and it is our task to show that the two in fact coincide. To this end, it is enough to show that any simple \begin{equation}\label{eq.alt_descr} x\le (z_1\ldots z_n)(z_1\ldots z_n)^* = z_1\ldots z_nz_n^*\ldots z_1^* \end{equation} is a summand of $(z_1\ldots z_{n-1})(z_1\ldots z_{n-1})^*yy^*$ for some $y\in\widehat H$, for we can then use induction on $n$. 

Consider $H$ as a comodule over itself, via the right adjoint coaction $h\mapsto h_2\otimes S(h_1)h_3$; more concretely, $H$ is simply the direct sum of all $y\otimes y^*$, $y\in\widehat H$. We leave it to the reader to check that for any $H$-comodule $V$, the back-and-forth maps between $H\otimes V$ and $V\otimes H$  defined by 
\[
	h\otimes v\mapsto v_0\otimes hv_1, \qquad v\otimes h\mapsto h\overline{S}(v_1)\otimes v_0
\]
are mutually inverse $H$-comodule isomorphisms, where $H$ has the adjoint coaction, $H\otimes V$ and $V\otimes H$ have the usual tensor product comodule structures, and $\overline{S}$ is the inverse of the antipode. In particular, we have \begin{equation}\label{eq.alt_descr_bis}
	z\otimes H\cong H\otimes z,\ \forall z\in R_+(H). 
\end{equation} 

To conclude, note that as a consequence of \Cref{eq.alt_descr}, $x$ is a summand of \[ (z_1\ldots z_{n-1})\otimes H \otimes(z_1\ldots z_{n-1})^*\ \cong\ (z_1\ldots z_{n-1})\otimes(z_1\ldots z_{n-1})^* \otimes H, \] where the isomorphism follows from \Cref{eq.alt_descr_bis} applied to $z=z_1\ldots z_{n-1}$. Finally, since $x$ is simple, it must be a subcomodule of some summand $(z_1\ldots z_{n-1})(z_1\ldots z_{n-1})^* yy^*$ of the latter.
\end{proof}

\begin{re}
More conceptually, the isomorphism $H\otimes V\cong V\otimes H$ exhibited in the proof says that $H$, with the right adjoint $H$-coaction and the right regular $H$-action, is a Yetter-Drinfeld module, and hence an element of the Drinfeld center of the category $\cM^H$. By definition, the Drinfeld center is the category consisting of $H$-comodules $X$ endowed with natural isomorphisms $X\otimes -\cong -\otimes X$ required to satisfy certain compatibility conditions. We refer the reader to \cite[10.6]{MR1243637} and \cite[XIII]{MR1321145} for these notions.  
\end{re}

Versions of the cocentral fusion sub-semiring $R_+(P)\le R_+(H)$, described as in part (b) of \Cref{pr.alt_descr}, have appeared in \cite{MR2183279,MR2383894,MR2130607}. The identification of the cocenter with this particular sub-semiring is a non-commutative analogue of the fact that in the classical case, for a compact group $G$, one recovers in this way precisely the fusion semiring of the quotient $G/Z(G)$ by the center (e.g. \cite[2.8]{MR2130607}).

\begin{re}
The pared-down discussion in \Cref{pr.alt_descr} is included here for the convenience of the reader, but let us mention that a more general version holds (and is essentially available in the literature), in the sense that sub-semirings defined as in (a) and (b) coincide for any so-called \define{fusion category} (even after dropping the usual requirement that there be only finitely many simple objects). 

Equation \Cref{eq.alt_descr_bis} still holds in this general setting if $H$ is taken, as in the text, to be the direct sum of all $y\otimes y^*$ for $y$ ranging over the simple objects. This is the gist of \cite[3.3, 3.4, 3.5]{MR2383894} (adapted to the case of infinitely many simples), which show that the initial grading of a fusion category coincides with the decomposition of the Grothendieck semiring of the entire category into summands as a module over the sub-semiring defined as in (b).    
\end{re}


\section{Simple cocenters}\label{se.simplicity}

As announced in the introduction, the main result of this section is:

\begin{theorem}\label{th.main}
For any cosemisimple Hopf algebra $H$ whose fusion semiring is free on a datum $(R,*,\circ)$, the cocenter $\iota:P\to H$ is simple in the sense of \Cref{def.simple}. 
\end{theorem}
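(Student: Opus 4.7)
The strategy is to show that any non-trivial ad-invariant (with respect to the $P$-adjoint action), involution-invariant, based sub-semiring $T$ of $R_+(P)$ equals $R_+(P)$. The arguments are combinatorial in the free fusion semiring, using \Cref{pr.alt_descr}(b) to identify $R_+(P)$ as the sub-semiring of $R_+(H)$ generated by the palindromic simples $\{a_{uu^*}:u\in\langle R\rangle\}$ (together with $\circ$-fusion analogues when $\circ$ is non-trivial).

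First I extract short palindromes from $T$. Take $a_w\in T$ non-trivial, $w=r_1\cdots r_n\in\langle R\rangle$ balanced. Closure of $T$ under products and duals places every simple summand of $a_w a_{w^*}$ and $a_{w^*}a_w$ in $T$; a direct iterated application of \Cref{eq.free} to these products shows that the length-two palindromes $a_{r_1 r_1^*}$ and $a_{r_n^* r_n}$ appear as summands (along with their $\circ$-variants when defined). Next I bootstrap to $a_{rr^*}\in T$ for every letter $r\in R$: since $\deg(rr^*)=1$ in the chain group, $a_{rr^*}\in R_+(P)$, so ad-invariance gives $a_{rr^*}\cdot a_{r_1 r_1^*}\cdot a_{rr^*}\in T$. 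A short computation with \Cref{eq.free} evaluates this triple product (for $r\ne r_1$) to the single simple $a_{rr^* r_1 r_1^* rr^*}$; squaring this element inside $T$ and applying \Cref{eq.free} with a suffix-of-length-five decomposition exhibits $a_{rr^*}$ itself as a summand, placing $a_{rr^*}$ in $T$.

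For an arbitrary word $u=s_1\cdots s_m\in\langle R\rangle$, set $y=a_{uu^*}\in R_+(P)$; ad-invariance yields $a_y\cdot a_{s_1 s_1^*}\cdot a_{y^*}\in T$. The free fusion rule shows that $a_y\cdot a_{s_1 s_1^*}$ contains $a_y$ as a summand (through the cancellation matching the last letter $s_1^*$ of $uu^*$ with the first letter $s_1$ of $s_1 s_1^*$) and that $a_y\cdot a_{y^*}=a_y^2$ contains $a_y$ itself (through the middle-length decomposition, in which the last $m$ letters of $uu^*$ cancel with the first $m$ letters to return $uu^*$). Concatenating these observations places $a_{uu^*}$ in $T$ for every $u\in\langle R\rangle$. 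By \Cref{pr.alt_descr}(b) this forces $T\supseteq R_+(P)$, and the reverse inclusion is trivial, so $T=R_+(P)$. The $\circ$-fusion generators (when present) are handled by an analogous argument; the main obstacle throughout is the explicit identification of the relevant summands at each stage, though each is ultimately a direct instance of the prefix/suffix matching dictated by \Cref{eq.free}.
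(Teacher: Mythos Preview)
There is a genuine gap in your use of ad-invariance. You repeatedly assert that ad-invariance of $Q\le P$ (equivalently, of $T\le R_+(P)$) yields statements of the form ``$a_v\cdot a_w\cdot a_{v^*}\in T$'' whenever $a_v\in R_+(P)$ and $a_w\in T$. This is not what ad-invariance provides. At the Hopf-algebra level, ad-invariance says $C_v\triangleright C_w\subseteq Q$; but the span of the elements $c_1 d\,S(c_2)$ is in general a \emph{proper} subspace of $C_vC_wC_{v^*}$. Combined with \Cref{rem.nonzero}, the conclusion you are entitled to is only that \emph{some} simple summand of $a_v a_w a_{v^*}$ lies in $\widehat Q$, not all of them. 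Equivalently, via the exact sequence $Q\to P\to K$: if $\pi_*(W)$ is trivial then $\pi_*(V\otimes W\otimes V^*)\cong(\dim W)\,\pi_*(V)\otimes\pi_*(V)^*$, which need not be trivial, so arbitrary summands of $V\otimes W\otimes V^*$ need not lie in $\widehat Q$.

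Your bootstrap step survives this objection by accident: when $\circ$ is trivial and $r\ne r_1$, the triple product $a_{rr^*}a_{r_1r_1^*}a_{rr^*}$ is a \emph{single} simple, so ``some summand is in $T$'' already gives what you want. But the final step collapses: the triple product $a_{uu^*}\,a_{s_1s_1^*}\,a_{uu^*}$ has many simple summands, and exhibiting $a_{uu^*}$ among them does not place $a_{uu^*}$ in $T$. (There is also a notational slip---you write $a_y$ with $y=a_{uu^*}$---and the $|R|=1$ case and the non-trivial-$\circ$ case are not really handled.) The paper's proof avoids this pitfall by choosing the conjugating word $w$ so that \emph{every} simple summand of $a_w a_{vv^*} a_{w^*}$ begins with the desired prefix $u$; then whichever summand ad-invariance forces into $\widehat Q$ is automatically of the form $a_{u\ldots}$, and one concludes $a_ua_{u^*}\le a_{u\ldots}(a_{u\ldots})^*\in R_+(Q)$ via \Cref{rem.prods_dominated_by_prods}.
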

\begin{proof}
As in \Cref{subse.free}, we denote the correspondence between words on $R$ and simple $H$-comodules by $\langle R\rangle \ni x\mapsto a_x\in\widehat H$. The statement reduces to showing that the only ad-invariant Hopf subalgebras $Q\le P$ are the two obvious ones. Throughout the rest of the proof, fix such a $Q$, and assume it is strictly larger than the scalars. Our aim will be to show that $Q=P$. 

According to part (b) of \Cref{pr.alt_descr}, it suffices to prove that $R_+(Q)\le R_+(P)$ contains $a_ua_{u^*}$ for any word $u$. Let $a_v\in R_+(Q)$ for some non-empty word $v$ (if there is no such $v$ then $Q$ is just the constants, and there is nothing to prove). By the fusion rules \Cref{eq.free}, $a_{vv^*}\in R_+(Q)$. Write $vv^*=r\ldots r^*$ for some $r\in R$, and consider the two possibilities:

(Case 1) The alphabet $R$ is of cardinality at least two. Then, I claim that for any word $u$, $R_+(Q)$ contains some simple $a_{u\ldots}$. Assuming this for now, note that $a_ua_u^*$ is a $Q$-comodule, being a summand of $a_{u\ldots}(a_{u\ldots})^*$: Apply \Cref{rem.prods_dominated_by_prods} to the case when $y^*=x$ is the larger word $u\ldots$, while $(y')^*=x'=u$. Proving this was our goal to begin with.

To prove the claim, let $s\ne r^*$ be some letter, $u$ an arbitrary word, and set $w=uu^*s^*s$. By the fusion rules \Cref{eq.free}, $a_w$ is a summand in $a_ua_u^*a_s^*a_s$, and hence belongs to the fusion subsemiring generated by $a_ua_u^*$ and $a_s^*a_s$. From \Cref{pr.alt_descr} (b) it follows that $a_w\in R_+(P)$. 

When expanding the product $a_wa_{vv^*}a_{w^*}$ using \Cref{eq.free}, the choice of $s$ as being different from $r^*$ imlies we get $a_{wvv^*w^*}$ and perhaps some lower terms involving the fusion of $w$ and $vv^*$ (or the fusion of $vv^*$ and $w^*$). In all of these terms, the fusion can at most eat up the last letter of $w$ and the first letter of $vv^*$ (or the last letter of $vv^*$ and the first letter of $w^*$, respectively). In any event, all resulting terms are of the form $a_{u\ldots}$, i.e. they are simples corresponding to words starting with $u$.

Now, since $Q\le P$ is assumed to be left ad-invariant, acting on $C_{vv^*}\le Q$ with $C_w\le P$ will produce a non-zero element of $Q$ by \Cref{rem.nonzero}. The previous paragraph implies that this element belongs to some simple coalgebra of the form $C_{u\ldots}$, which finishes the proof of the claim made at the beginning of Case 1.       

(Case 2) $R$ is a singleton $\{r\}$. In this case the duality involution $*$ is necessarily the identity, and there are only two possibilities for the fusion $\circ$: $r\circ r$ is either $\emptyset$ or $r$. These are precisely the fusion semirings from \Cref{ex.ortho,ex.auto} respectively, and the desired result is essentially proven in \cite[4.1, 4.7]{MR2504527} (Wang's statements refer to the quantum groups themselves, but the proofs rely only on the structure of their fusion semirings). 
\end{proof}

\begin{re}\label{rem.def_comparison_bis}
In the above proof, part (1) can be tweaked to get a somewhat stronger result: With the notations of the theorem, the quantum group associated to \define{any} Hopf subalgebra $P'\le P$ is simple, provided among the simple comodules of $P'$ we can find two, say $a_x$, $a_y$, for words $x,y\in \langle R\rangle$ ending in different letters. In particular, there is a rich supply of finitely generated Hopf subalgebras with simple underlying quantum groups (see \Cref{rem.def_comparison} above).  
\end{re}

Call a quantum group with simple cocenter \define{projectively simple}. In view of \Cref{ex.free,ex.reflection}, free unitary groups and quantum reflection groups are all projectively simple. As noted before, this generalizes the main result of \cite{chirvasitu:123509}. The following simple observation is of some use in determining what the cocenters of the theorem look like.

\begin{proposition}\label{pr.center_of_free}
Let $H$ be a cosemisimple Hopf algebra with fusion semiring free on the data $(R,*,\circ)$. Then, the universal grading group $\mathcal{C}(H)$ is generated by $R$, subject to the relations turning $*$ into the inverse and $\circ$ into the multiplication. 
\end{proposition}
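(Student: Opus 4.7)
My plan is to use the presentation of $\mathcal{C}(H)=\Gamma(H)$ recalled just before \Cref{def.center_cocenter}: as a group, it is generated by $\widehat{H}$ subject to the relations $x\cdot y=z$ whenever $z\le xy$ in $R_+(H)$, with the inverse operation furnished by $*$. Let $G$ denote the group abstractly presented by generators $R$, with relations $r\cdot r^*=1$ for $r\in R$ and $r\cdot s=r\circ s$ whenever $r\circ s\ne\emptyset$. The goal is to construct mutually inverse homomorphisms $G\rightleftarrows \mathcal{C}(H)$.

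For the map $G\to\mathcal{C}(H)$ sending $r\mapsto a_r$, it suffices to verify that the defining relations of $G$ hold in $\mathcal{C}(H)$. Applied to single letters $r,s\in R$, the fusion rule \Cref{eq.free} shows that $a_\emptyset=1$ is a summand of $a_r a_s$ exactly when $s=r^*$ (take $v,w$ empty, $g=r$), and that $a_{r\circ s}$ is a summand whenever $r\circ s\ne\emptyset$ (take $g$ empty). By the definition of $\Gamma(H)$, these force the desired relations $a_r\cdot a_{r^*}=1$ and $a_r\cdot a_s=a_{r\circ s}$ in $\mathcal{C}(H)$. The same fusion rule, applied with $g$ empty to arbitrary words, also gives $a_{r_1\ldots r_n}\le a_{r_1}a_{r_2\ldots r_n}$, so inductively $a_{r_1\ldots r_n}=a_{r_1}\cdots a_{r_n}$ in $\mathcal{C}(H)$; in particular the proposed map is well-defined and surjective.

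For the reverse direction I would invoke the universality of $\mathcal{C}(H)$ via \Cref{pr.gradings}, by exhibiting a $G$-grading of $R_+(H)$. The natural candidate is $\deg(a_{r_1\ldots r_n}):=r_1\cdots r_n\in G$, with $\deg(a_\emptyset)=1$. The point requiring verification is that this respects the multiplicative structure in the sense of \Cref{def.grading}, i.e. every simple summand $a_x\le a_y a_z$ satisfies $\deg(a_x)=\deg(a_y)\deg(a_z)$. By \Cref{eq.free}, such an $x$ is either of the form $vw$ or $v\circ w$ for some splitting $y=vg$, $z=g^*w$. The first case is immediate once one notes that $\deg(g^*)=\deg(g)^{-1}$ in $G$ (because $*$ reverses words and inverts each letter, and the relations $r\cdot r^*=1$ enforce this letterwise); the second case then follows additionally from $r\cdot s=r\circ s$ applied to the terminal letter of $v$ and the initial letter of $w$.

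I expect the only nontrivial step to be this well-definedness check for $\deg$, but once \Cref{eq.free} is unpacked into the two cases above it is essentially mechanical, and crucially uses precisely those relations we built into $G$. Once the $G$-grading is in hand, \Cref{pr.gradings} produces a group homomorphism $\mathcal{C}(H)\to G$ sending $a_r\mapsto r$, which inverts the surjection constructed in the second paragraph, finishing the proof.
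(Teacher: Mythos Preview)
Your proof is correct. The approach overlaps with the paper's in that both start from the presentation $\Gamma(H)=\langle\widehat H\mid x\cdot y=z\text{ whenever }z\le xy\rangle$, and both observe that the fusion rule \Cref{eq.free} forces the single-letter relations $a_r\cdot a_{r^*}=1$ and $a_r\cdot a_s=a_{r\circ s}$ together with $a_{r_1\ldots r_n}=a_{r_1}\cdots a_{r_n}$.

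Where you diverge is in handling the converse. The paper argues, in one terse sentence, that the relations of $\Gamma(H)$ are exactly those listed; you instead construct an explicit inverse homomorphism by exhibiting a $G$-grading of $R_+(H)$ and invoking the universal property of $\cC(H)$ via \Cref{pr.gradings}. Your route is slightly longer but more transparent: the paper's sentence ``$z\le xy$ implies $z=x\circ y$ when $y\ne x^*$'' is not literally true for arbitrary words $x,y$ (there can be many summands coming from different splittings $y=g^*w$), so one still owes an argument that all resulting relations are consequences of the single-letter ones. Your grading check does exactly this, case-splitting on whether the summand is $a_{vw}$ or $a_{v\circ w}$ and using precisely the relations built into $G$. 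So what your approach buys is a clean separation of the two directions and an honest verification of the harder one; what the paper's buys is brevity, at the cost of leaving that verification implicit.
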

\begin{proof}
According to the discussion immediately preceding \Cref{rem.nat_induced_center_map}, the universal grading group can be described as being generated by $\widehat{H}$, with relations $x\cdot y=z$ whenever $z\le xy\in R_+(H)$ for irreducibles $x,y,z$. But by our free fusion \Cref{eq.free}, $z\le xy$ implies $z=x\circ y$ when $y\ne x^*$, and $z=1$ or $z=x\circ x^*$ when $y=x^*$. The resulting relations are the ones in the statement.  
\end{proof}

Let's see what this says about some of the examples from \Cref{subse.free}.

\begin{ex}\label{ex.free_center}
Recall from \Cref{ex.free} that the fusion semirings of the free unitary groups $A_u(Q)$ (for some positive $n\times n$ matrix $Q$, $n\ge 2$) are free on $R=\{\alpha,\alpha^*\}$. It follows from \Cref{pr.center_of_free} that the grading group $\mathcal{C}(A_u(Q))$ is $\bZ$, and hence the center is precisely the central subgroup $A_u(Q)\to \bC[t,t^{-1}]$, $u_{ij}\mapsto\delta_{ij}t$ of \cite[4.5]{MR2504527}. It is now a simple exercise to show that the simple comodules of the cocenter are those indexed by words in $\alpha,\alpha^*$ having equal numbers of $\alpha$'s and $\alpha^*$'s (see e.g. \cite[Lemma 7]{chirvasitu:123509}).  
\end{ex}

\begin{ex}\label{ex.reflection_center}
According to \Cref{ex.reflection}, the fusion semirings $R_+(A_h^s(n))$ (for $n\ge 4$, which we henceforth assume) are free on data $(\bZ/s\bZ,*,\circ)$ which precisely imitates the additive group structure of $\bZ/s\bZ$: $x^*=-x$ and $x\circ y=x+y$. \Cref{pr.center_of_free} then implies that the center is $A_h^s(n)\to\bC[t,t^{-1}]/(t^s=1)$ (when $s=\infty$ there is no relation at all), defined as before by $u_{ij}\mapsto \delta_{ij}t$. The simple comodules of the cocenter are those indexed by words $r_1\ldots r_k\in\langle \bZ/s\bZ\rangle$ satisfying $r_1+\ldots+r_k=0\in\bZ/s\bZ$. 

As in the previous example, the cocenter is large enough to allow, via \Cref{rem.def_comparison_bis}, for many examples of finitely generated Hopf algebras with simple underlying compact quantum groups (at least when $s\ge 2$).  

Moreover, note that since the normal quantum subgroups of $A_h^s(n)$ are all finite, it has no non-trivial normal \define{connected} quantum subgroups (see \Cref{rem.def_comparison}). It also satisfies Wang's conditions (a), (b) and (c) listed in the same remark above, and hence meets his definition of a simple compact quantum group (as opposed to his stronger condition of absolute simplicity, recalled in \Cref{rem.def_comparison}). Indeed, $A_h^s(n)$ is finitely generated as an algebra (condition (a)) by definition, while \Cref{rem.def_comparison} argues that conditions (b) and (c) are always satisfied by quantum groups with free fusion semirings. 
\end{ex}

\begin{ex}\label{ex.free_product_center}
When applied to \Cref{ex.free_product}, \Cref{pr.center_of_free} shows that the universal grading group construction $\cC(\bullet)$ preserves coproducts when applied to cosemisimple Hopf algebras \define{with free fusion semirings}. The phrase in italics can in fact be dropped, as we will see in the next section. 
\end{ex}


\section{Projective simplicity of free products}\label{se.free_prod}

Here, we take a closer look at coproducts of cosemisimple Hopf algebras, studied in \cite{MR1316765} and recalled above in \Cref{ex.free_product}. After giving a description of the center of such a coproduct in terms of the centers of the individual Hopf algebras (\Cref{th.centers_free_prod}), we show in \Cref{th.simplicity_free_prod} that coproducts are automatically projectively simple (given some mild non-triviality conditions). 

To fix notation, we will be working with families $H_i$, $i\in I$ of cosemisimple Hopf algebras and their coproduct $H$ in the category of Hopf algebras (or equivalently algebras, or $*$-algebras, or CQG algebras, etc.; all of these notions coincide when they make sense for the objects in question). 

The discussion in \Cref{ex.free_product} will be implicit throughout the entire section, and we will use the same notation and setup repeatedly. We also reprise the notation $x\mapsto a_x$ for the correspondence between words and simples, when we want to distinguish between concatenation of words and products in fusion semirings.

\begin{theorem}\label{th.centers_free_prod}
Let $H_i$, $i\in I$ be a set of cosemisimple Hopf algebras, and $\displaystyle H=\coprod_i H_i$ their coproduct. The canonical morphisms $\cC(H_i)\to\cC(H)$ obtained by applying \Cref{rem.nat_induced_center_map} to the inclusions $H_i\le H$ make the universal grading group of $H$ the coproduct of the groups $\cC(H_i)$.
\end{theorem}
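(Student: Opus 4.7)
The plan is to verify the universal property of the coproduct of groups directly, exploiting representability. Universality of $H \to k[\cC(H)]$ yields a natural bijection between $\mathrm{Hom}_{\mathrm{Grp}}(\cC(H), \Gamma)$ and the set of central arrows $H \to k[\Gamma]$, for any group $\Gamma$; similarly for each $H_i$. The claim thus reduces to exhibiting, naturally in $\Gamma$, a bijection
\[
\{\text{central arrows } H \to k[\Gamma]\} \;\longleftrightarrow\; \prod_{i\in I}\{\text{central arrows } H_i \to k[\Gamma]\},
\]
implemented by restriction along $H_i \hookrightarrow H$; upon translating back through representability this realizes $\cC(H)$ as the coproduct of the $\cC(H_i)$'s.

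One direction is immediate: any central arrow $\pi: H \to k[\Gamma]$ restricts to a central arrow on each $H_i$ by \Cref{rem.nat_induced_center_map}. For the converse, given central arrows $\pi_i: H_i \to k[\Gamma]$, the universal property of $H = \coprod_i H_i$ in Hopf algebras assembles them into a unique Hopf algebra map $\pi: H \to k[\Gamma]$ with $\pi|_{H_i} = \pi_i$ for each $i$. The only real thing to check is that this $\pi$ is itself central, and I would verify this via \Cref{pr.alt_char_centr}: the generic simple $H$-comodule has the form $V_1 \otimes \cdots \otimes V_n$ as described in \Cref{ex.free_product}, with $V_k$ a nontrivial simple of $H_{i_k}$ and $i_k \ne i_{k+1}$. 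Centrality of each $\pi_{i_k}$ ensures that $V_k$, regarded as a $k[\Gamma]$-comodule via $\pi$, is a direct sum of copies of a single one-dimensional comodule $k\gamma_k$ for some $\gamma_k \in \Gamma$. The tensor product $V_1 \otimes \cdots \otimes V_n$ then becomes a direct sum of copies of $k\gamma_1 \otimes \cdots \otimes k\gamma_n = k(\gamma_1 \cdots \gamma_n)$, a single one-dimensional $k[\Gamma]$-comodule, which is precisely what \Cref{pr.alt_char_centr} requires.

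The two constructions are visibly mutually inverse and natural in $\Gamma$, which concludes the argument. The only delicate point is the centrality of the assembled map $\pi$; the representation-theoretic reformulation \Cref{pr.alt_char_centr} reduces it to a direct inspection of the simples of $\cM^H$ recalled in \Cref{ex.free_product}, so even this obstacle is rather mild. One could alternatively phrase the whole argument in terms of gradings via \Cref{pr.gradings}, but the representation-theoretic route avoids any need to recheck the fusion rules of \Cref{ex.free_product} case by case.
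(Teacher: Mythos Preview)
Your proof is correct and follows essentially the same representability/Yoneda strategy as the paper: both reduce to showing that central arrows $H\to k[\Gamma]$ biject with tuples of central arrows $H_i\to k[\Gamma]$, naturally in $\Gamma$. The only minor difference is in how centrality of the assembled map is verified: the paper observes directly that the defining condition \Cref{eq.central} is multiplicative in $h$ and so can be checked on the algebra generators $H_i$, whereas you pass through \Cref{pr.alt_char_centr} and the description of simples in $\cM^H$ from \Cref{ex.free_product}; both arguments are valid, the paper's being marginally shorter.
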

\begin{proof}
For an arbitrary Hopf algebra $H$ and a group $\Gamma$, denote by $\cat{Cent}(H,\Gamma)$ the set of central arrows $H\to k[\Gamma]$. By \Cref{def.center_cocenter}, the universal grading group $\cC(H)$ is the object representing the functor $\cat{Cent}(H,-):\cat{Gp}\to\cat{Set}$ from the category of groups to that of sets, sending a group $\Gamma$ to $\cat{Cent}(H,\Gamma)$:
\begin{equation}\label{eq.added} 
	\cat{Cent}(H,\Gamma)\cong\cat{Hom}_{\cat{Gp}}(\cC(H),\Gamma) \text{ functorially in } \Gamma. 
\end{equation} 

Going back to the setting of the theorem, $\cC(H)$ represents the functor 
\begin{equation}\label{eq.centers_free_prod}
	\cat{Cent}(H,-) \cong \cat{Cent}\left(\coprod_i H_i,-\right)\cong \prod_i\cat{Cent}(H_i,-)\cong \prod_i\cat{Hom}_{\cat{Gp}}(\cC(H_i),-),
\end{equation}
where the products and isomorphisms are in the category of functors $\cat{Gp}\to\cat{Set}$, and: 

The first natural isomorphism is just the definition of $H$.

To verify the second isomorphism, note that we already know there is an identification, natural in $\Gamma$, between the set of Hopf algebra maps $H\to k[\Gamma]$ and the set of tuples of Hopf algebra maps $H_i\to k[\Gamma]$, because $H$ is the coproduct of the $H_i$'s in the category of Hopf algebras. The desired isomorphism then simply states that a Hopf algebra map $H\to k[\Gamma]$ is in addition central if and only if the corresponding compositions $H_i\to H\to k[\Gamma]$ are all central. This follows from the fact that centrality for a map out of $H$ can be checked on a set of algebra generators, and the $H_i$'s generate $H$ as an algebra.

The third isomorphism is the universal property of the individual grading groups $\cC(H_i)$.

The last functor in the isomorphism chain \Cref{eq.centers_free_prod} is represented by the coproduct of the groups $\cC(H_i)$ (by the very definition of a coproduct; e.g. \cite[III.3]{MR1712872}). On the other hand, the left hand side of \Cref{eq.centers_free_prod} is represented by $\cC(H)$, according to \Cref{eq.added}. By the uniqueness-up-to-isomorphism of a representing object (a consequenc eof Yoneda's lemma; \cite[III.2]{MR1712872}), the coproduct of the groups $\cC(H_i)$ is isomorphic to $\cC(H)$. Checking that the coproduct structure map $\cC(H_i)\to \cC(H)$ we've exhibited via \Cref{eq.centers_free_prod} is precisely the canonical one from \Cref{rem.nat_induced_center_map} is now more or less tautological. 
\end{proof}

\begin{re}
Although the proof of the theorem uses Hopf algebra maps and such, thus looking like it is very much specific to Hopf algebras, one could rephrase it so as to get a stronger result: The universal grading group of a coproduct of fusion rings in the sense of \cite{MR2383894} (where the term is used differently) is the coproduct of the individual universal grading groups.   
\end{re}

\begin{re}
The result can be regarded as a ``free'' analogue of the fact that taking the center of product of groups is the product of the individual centers. 
\end{re}

The next result says that free products provide a rich supply of simple quantum groups by the same natural cocenter construction.

\begin{theorem}\label{th.simplicity_free_prod}
Let $H_i$, $i\in I$ be a family of at least two non-trivial cosemisimple Hopf algebras, and $\displaystyle H=\coprod H_i$ their coproduct. The quantum group correpsonding to $H$ is projectively simple.    
\end{theorem}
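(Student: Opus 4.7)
The plan is to adapt the proof of \Cref{th.main} (specifically Case 1), with the assumption $|I|\geq 2$ providing the combinatorial room analogous to "alphabet of size at least two" in that argument. Let $Q\leq P$ be an ad-invariant Hopf subalgebra of $P$ strictly larger than the scalars, and fix a non-trivial simple $a_v\in R_+(Q)$, written as a reduced word $v=V_1\cdots V_m$ with $V_k\in\widehat{H_{j_k}}$. By \Cref{pr.alt_descr}(b), to conclude $Q=P$ it suffices to show that $a_za_{z^*}\in R_+(Q)$ for every simple $z\in\widehat H$, and by \Cref{rem.prods_dominated_by_prods} this reduces in turn to producing, for every reduced word $u$, some simple $a_Z\in R_+(Q)$ whose underlying reduced word begins with $u$; one then gets $a_ua_{u^*}\leq a_Za_{Z^*}\in R_+(Q)$.

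To produce such an $a_Z$, I would build a simple $a_w\in R_+(P)$ enjoying the following two properties: (i) $w$ begins with $u$ as a reduced word and $|w|>|u|$, and (ii) the last letter $L$ of $w$ satisfies $L\neq V_1^*$ and $L\neq V_m$. These two conditions force the free-product fusion rules of \Cref{ex.free_product} to avoid case (iii) at both boundaries when one expands $a_wa_va_{w^*}$, so every simple summand preserves the first $|w|-1$ letters of $w$ and in particular begins with $u$. The ad-action $C_w\triangleright C_v$ is then a non-zero element of $Q$ by \Cref{rem.nonzero}, living inside $\bigoplus_ZC_Z$ where $Z$ ranges over these summands; cosemisimplicity of $Q$ forces at least one such $C_Z$ to be contained in $Q$, yielding the required $a_Z\in R_+(Q)$.

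The construction of $w$ is the main obstacle. Since $\mathcal{C}(H)=\coprod_i\mathcal{C}(H_i)$ by \Cref{th.centers_free_prod}, writing $w=u\cdot t$ as a concatenation of reduced words with $w\in R_+(P)$ forces $[t]=[u]^{-1}$ in this free product group; simultaneously, the concatenation $u\cdot t$ must itself be reduced, and the last letter of $t$ must avoid the two specific simples $V_1^*$ and $V_m$. Realizing all three constraints at once calls for inserting buffer letters drawn from factors $H_\ell$ whose index differs from the critical ones (the index of the last letter of $u$, as well as $j_1$ and $j_m$); the hypothesis $|I|\geq 2$, together with the non-triviality of each $H_i$, supplies exactly the flexibility needed to pick such letters and to choose representatives of prescribed classes in each $\mathcal{C}(H_\ell)$. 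A brief case analysis---distinguishing according to whether the reduced form of $[u]^{-1}$ in $\coprod_i\mathcal{C}(H_i)$ begins or ends in a constrained factor, and whether sufficiently rich choices of buffer letters are available in the factor cocenters---completes the construction and hence the proof.
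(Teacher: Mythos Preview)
Your overall architecture is sound and parallels the proof of \Cref{th.main}, but the proposal has one concrete technical gap and one large omission.

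The technical gap is in condition (ii). Requiring only $L\neq V_1^*$ and $L\neq V_m$ does \emph{not} guarantee that every summand of $a_wa_va_{w^*}$ keeps the prefix $w_1\ldots w_{|w|-1}$. Suppose $m=1$ and $L$ lies in the same $\widehat{H_{j_1}}$ as $V_1$ (which your condition allows, provided $\widehat{H_{j_1}}$ has at least three non-trivial simples). Then $a_wa_{V_1}$ falls under case (ii) of the free-product fusion rules, yielding summands $a_{w_1\ldots w_{|w|-1}v_k}$ with $v_k\le L\cdot V_1$ in $R_+(H_{j_1})$. Nothing prevents $v_k=L$: this happens whenever $V_1\le L^*L$, which is generic. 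In that case the second boundary $v_k\cdot L^*$ triggers case (iii), and the recursion proceeds to cancel $w_{|w|-1}$ against $w_{|w|-1}^*$, then $w_{|w|-2}$ against $w_{|w|-2}^*$, and so on---potentially erasing all of $w$. So ``every simple summand preserves the first $|w|-1$ letters'' is false as stated. The fix is to strengthen (ii) to demand that $L$ lie in a factor with index different from $j_1$ (and, when $m\ge 2$, different from $j_m$ or else satisfying $L\ne V_m$); this does force case (i) or a benign case (ii) at both boundaries, but it also further constrains the construction of $w$.

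The omission is that ``a brief case analysis \ldots\ completes the construction'' is precisely where the content of the proof lives, and you have not carried it out. Finding $t$ with $[t]=[u]^{-1}$ in the free product $\coprod_i\mathcal{C}(H_i)$, with the first letter of $t$ avoiding the index of the last letter of $u$, and the last letter of $t$ avoiding the (now strengthened) constraints coming from $v$, is a genuine case analysis that depends on whether $|I|=2$ or $|I|\ge 3$, on whether the relevant $H_j$'s are group algebras, and on whether a given factor has more than one non-trivial simple. It is not brief; it is the proof.

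For comparison, the paper avoids tailoring $w$ to $v$ altogether. Instead it first uses ad-invariance of $Q$ to replace the initially chosen element of $\widehat Q$ by one starting and ending in the \emph{same} $\widehat{H_i}$ (this is the paper's Step 2, itself a short case analysis). Once $v$ has this palindrome-friendly shape, one can act with words $w=u\ldots u^*\in\widehat P$ that are constructed uniformly and independently of $v$ (Step 1), and the boundary analysis becomes trivial. The trade-off: your approach is conceptually more direct (one ad-action instead of a preliminary modification of $v$ followed by another), but the paper's decoupling makes the combinatorics cleaner and sidesteps the strengthened-condition-(ii) issue entirely.

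One minor point: you invoke \Cref{rem.prods_dominated_by_prods}, which is stated for free fusion semirings in the sense of \Cref{def.free}. The free-product fusion rules are not of that form in general, so you need the analogue of that remark for coproducts (namely that $a_ua_{u^*}\le a_Za_{Z^*}$ whenever $u$ is an initial segment of the reduced word $Z$), which follows directly from iterating case (iii) of \Cref{eq.fusion_free_prod3}. The paper uses this implicitly in its Step 4.
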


This provides a rather strong answer to Problems 4.3 and 4.4 of \cite{Wang_new}, concerned with the simplicity of coproducts when the individual Hopf algebras are simple; the theorem dispenses with this last requirement entirely, with only the minor caveat that one must mod out the center. 

Before going into the proof, let us record some consequences. The following one is immediate by \Cref{th.centers_free_prod,th.simplicity_free_prod}:

\begin{corollary}\label{cor.simplicity_free_prod}
The free product of at least two non-trivial, centerless, linearly reductive quantum groups is simple.\qedhere
\end{corollary}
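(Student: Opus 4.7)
The plan is to combine the two preceding theorems in the only way that makes sense: centerlessness of each $H_i$ means that $\cC(H_i)$ is the trivial group, so Theorem~\ref{th.centers_free_prod} forces $\cC(H)$ to be the coproduct in $\cat{Gp}$ of trivial groups, hence trivial; and a quantum group with trivial universal grading group is its own cocenter, at which point Theorem~\ref{th.simplicity_free_prod} delivers simplicity for free.

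More precisely, I would first unpack the hypothesis: saying that the quantum group dual to $H_i$ is centerless amounts, via \Cref{def.center_cocenter} and \Cref{pr.gradings}, to the assertion that the universal grading of $R_+(H_i)$ is the trivial one, i.e.\ $\cC(H_i)=\{1\}$. Next, by \Cref{th.centers_free_prod}, the natural maps $\cC(H_i)\to\cC(H)$ exhibit $\cC(H)$ as the coproduct of the $\cC(H_i)$ in $\cat{Gp}$; a coproduct of trivial groups is trivial, so $\cC(H)=\{1\}$. Consequently the center $H\to k[\cC(H)]$ is just the counit, and its Hopf kernel---which by \Cref{def.center_cocenter} is the cocenter---is all of $H$. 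Finally, \Cref{th.simplicity_free_prod} (which applies since we have at least two non-trivial factors) says that this cocenter is simple, so $H$ itself is simple.

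There is essentially no obstacle here; the only thing one might worry about is bookkeeping, namely verifying that ``centerless'' in the statement really is the condition $\cC(H_i)=\{1\}$ used implicitly in \Cref{th.centers_free_prod}. But this is immediate from the equivalence recorded in \Cref{pr.gradings} together with \Cref{def.center_cocenter}, which defines the center as the initial central arrow and hence identifies trivial center with trivial universal grading group. Everything else is a formal assembly of \Cref{th.centers_free_prod} and \Cref{th.simplicity_free_prod}.
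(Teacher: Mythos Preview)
Your proposal is correct and follows exactly the approach the paper has in mind: the corollary is stated as immediate from \Cref{th.centers_free_prod} and \Cref{th.simplicity_free_prod}, and your argument simply unpacks that one-line justification, noting that trivial $\cC(H_i)$'s force $\cC(H)$ trivial, hence the cocenter equals $H$, which is then simple by \Cref{th.simplicity_free_prod}.
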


\begin{re}
The corollary refers to the notion of simplicity from \Cref{def.simple}. A moment's thought, however, should convince the reader that the result remains true if one adopts the definition \cite[3.3]{MR2504527}, provided we restrict ourselves to free products of finite families, if we insist on the finite generation of our Hopf algebras (see \Cref{rem.def_comparison} above or \cite[Remark 5]{chirvasitu:123509} for a comparison). 
\end{re}

\begin{corollary}
For $H_i$ and $H$ as in the statement of \Cref{th.simplicity_free_prod}, any proper normal quantum subgroup of $H$ is central. 
\end{corollary}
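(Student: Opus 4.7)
The plan is to leverage the simplicity of the cocenter provided by \Cref{th.simplicity_free_prod}. Let $\pi: H \to K$ be a proper normal quantum subgroup, write $P = \hker(\pi) \ne k$ for its Hopf kernel, and let $P_Z$ denote the cocenter itself. The goal is to prove $P_Z \subseteq P$, since that is equivalent to $\pi$ factoring through the center map $H \to Z$ and hence being central. The first step is to observe that $P \cap P_Z$ is an ad-invariant Hopf subalgebra of $P_Z$, so by the simplicity of $P_Z$ it equals either $P_Z$ or $k$; in the first case $\pi$ is central and we are done.

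It remains to exclude the case where $P \cap P_Z = k$ yet $P \ne k$. I would first deduce that all simples of $P$ are then one-dimensional: for $x \in \widehat{P}$, every simple summand of $xx^* \in R_+(P)$ has trivial degree in $\mathcal{C}(H)$ and hence lies in $R_+(P_Z)$ by \Cref{pr.alt_descr}; being also in $R_+(P)$, such a summand belongs to $R_+(P \cap P_Z) = R_+(k) = \{1\}$, so $xx^* = 1$ and $\dim x = 1$. Hence $P = k[G]$ for some group $G$ of grouplikes of $H$, and the aim becomes to force $G = \{1\}$. If every $H_i$ is itself a group algebra then $H$ is as well, $P_Z = k$, and the statement is vacuous, so I may assume some factor $H_{j_0}$ has a non-grouplike simple.

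The main obstacle is then to exploit a non-trivial $g \in G$ against the ad-invariance of $P$. The strategy would be to choose an index $j_0$ and a non-grouplike simple $V \in H_{j_0}$ such that, by the free-product fusion rules of \Cref{ex.free_product}, $V \otimes kg \otimes V^*$ is a single non-grouplike simple of dimension $(\dim V)^2 \ge 4$; the subspace $C_V \triangleright kg$ (non-zero by \Cref{rem.nonzero}) would then lie in its matrix coalgebra, disjoint from $k[G]$, contradicting ad-invariance. This arrangement works generically, in particular whenever $j_0$ can be picked distinct from all factors appearing in the normal form of $g$. The awkward sub-case is when $g$ is a single letter in $H_{j_0}$ and $V \otimes V^*$ happens to decompose inside $H_{j_0}$ entirely into grouplikes (this does occur, e.g.\ for $H_{j_0} = \mathcal{O}(Q_8)$, whose $2$-dimensional irrep $V$ satisfies $V \otimes V^* = 1 \oplus \chi_1 \oplus \chi_2 \oplus \chi_3$); there I would appeal to a second factor $H_{j'}$, available since $|I| \ge 2$, either by picking a non-grouplike $V' \in H_{j'}$ (for which $V' \otimes kg \otimes (V')^*$ is automatically a single non-grouplike simple of $H$, since no fusion occurs between different factors), or, if $H_{j'}$ is a group algebra, by first conjugating $g$ by a non-trivial grouplike $\gamma \in H_{j'}$ to produce $\gamma g \gamma^{-1} \in G$, whose multi-factor normal form puts us back in the main case.
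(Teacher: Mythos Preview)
Your argument is correct and follows essentially the same strategy as the paper's proof (where your $P$ and $P_Z$ are called $Q$ and $P$, respectively): both hinge on the observation that $P\cap P_Z$ is ad-invariant in the cocenter, invoke \Cref{th.simplicity_free_prod}, and then rule out $P\cap P_Z=k$ by acting with $C_V$ for a higher-dimensional simple $V$ in one of the factors; your intermediate deduction that $P\cap P_Z=k$ forces $P=k[G]$ is just the contrapositive of the paper's direct construction of a non-trivial element of the intersection. The one loose end in your case analysis is the jump from ``$j_0$ distinct from all factors of $g$'' straight to ``$g$ a single letter in $H_{j_0}$'': when $g$ has length $\ge 2$ and $|I|=2$ you may be unable to avoid all factors, but since every letter $g_k$ of $g$ is grouplike, $V\otimes g_k$ and $g_k\otimes V^*$ are again single non-trivial simples, so $VgV^*$ is still a single non-grouplike word and your main argument goes through---this just needs to be said.
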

\begin{proof}
According to the discussion preceding \Cref{rem.nonzero}, taking the associated quotient quantum group of a normal quantum subgroup implements a size-reversing one-to-one correspondence between normal, cosemisimple quotient Hopf algebras and ad-invariant Hopf subalgebras of $H$. Hence, the statement of the corollary can be rephrased as saying that any non-trivial ad-invariant Hopf subalgebra $\iota:Q\to H$ contains the cocenter $P\le H$. 

If $Q$ intersects the cocenter non-trivially, the intersection (being ad-invariant in $P$) must equal $P$ by \Cref{th.simplicity_free_prod}. Let us now assume that at least one of the $H_j$'s, say $H_{i_1}$, is not a group algebra (or there is nothing to prove). I claim that in this case, $P\cap Q$ is non-trivial, and hence we are in the situation we've already covered. 

To prove the claim, first suppose some $H_i$, $I\ni i\ne i_1$ has a simple comodule $y$ of dimension greater than $1$, and let $z=x_1\ldots x_n\in\widehat{Q}$ with $x_k\in H_{i_k}$. Acting on $C_z$ by $C_y$ via the left adjoint action will produce elements of matrix coalgebras associated with simple comodules of the form $y\ldots$ (i.e. words on $\bigcup\widehat{H_j}$ starting with $y$), some of which are non-zero by \Cref{rem.nonzero}. It follows that $Q$ has some simple $z'=y\ldots$ of dimension greater than $1$ (because $\dim y>1$), and $z'(z')^*\in R_+(Q)\cap R_+(P)$ will then contain some non-trivial summand. 

The only case left to treat is that when all $H_i$ are group algebras except for $H_{i_1}$. First act on $C_z$ by some non-trivial grouplike $x$ in an $H_i$, $i\ne i_1$, and then apply the previous discussion to the resulting simple $x\ldots$, whose underlying matrix coalgebra can now be acted upon by $C_y$ for some simple $y\in\widehat{H_{i_1}}$ of dimension at least $2$. 
\end{proof}

\begin{proof of simplfree}
Let $P\le H$ be the cocenter of $H$, and $Q\le P$ a non-trivial ad-invariant Hopf subalgebra. Our task is to show that in fact $Q=P$. It is implicit here that $P$ is non-trivial, i.e. $H_i$ are not all group algebras (otherwise there would be nothing to prove). We use the same notation as before for simple comodules of $H$, regarding them as words on the alphabet $\bigcup\widehat{H_i}$. So an expression such as $\ldots x$, $x\in\widehat{H_i}$ means word ending with $1\ne x\in\widehat{H}$. We might also refer to such a word simply as ending in $\widehat{H_i}$ (and similarly for `starting' instead of `ending'). 

{\bf (Step 1)} For any simple $u=u_1\ldots u_n\in\widehat{H}$ with $u_k\in\widehat{H_{i_k}}$, $P$ has a simple $u\ldots u^*$ having $u$ as an initial segment and $u^*$ as a final segment. Indeed, if at least one $H_j$, $j\ne i_n$ is not a group algebra, the word $uyu^*$ for some non-trivial simple $y$ of the cocenter of $H_j$ will do. Otherwise (in other words, if every $H_j$, $j\in I$ is a group algebra except for $H_{i_n}$), let $y$ be a non-trivial simple of the cocenter of $H_{i_n}$ and $x$ a non-trivial simple of some $H_j$, $j\ne i_n$, and consider the word $uxyx^*u^*$ instead. 

{\bf (Step 2)} There is an $i$ such that $Q$ has a non-trivial simple simple starting and ending in $\widehat{H_i}$. To see this, let $z = x_1\ldots x_n\in\widehat{Q}$ be an arbitrary word, with $1\ne x_k\in\widehat{H_{i_k}}$, and assume without loss of generality that $i_1\ne i_n$ (or we would be done). If $|I|\ge 3$, acting on $C_z$ by $C_v$ for some $v\in\widehat{P}$ starting in $H_i$, $i\ne i_1,i_n$ will do the trick, in the now-familiar fashion (such a $v$ exists by Step 1). So we are reduced to $I=\{1,2\}$.  

Furthermore, by the fusion rules \Cref{eq.fusion_free_prod2,eq.fusion_free_prod3}, the only way in which $a_za_z^*\in R_+(Q)$ could fail to have some non-trivial summand would be for the letters of $z$ to all correspond to one-dimensional comodules of the various $H_i$'s. Assume all $x_i$'s are one-dimensional (or we would be done).  

We may now further assume without loss of generality that $H_{i_n}$ is not a group algebra (since $\{i_1,i_n\} = \{1,2\}$), and in particular, that there is some non-trivial $y\in\widehat{H_{i_n}}$, $y\ne x_{i_n}^*$. Now act on $C_z$ with $C_v$ for some $v=y\ldots y^*\in\widehat{P}$ (Step 1 again) to obtain some word in $\widehat{Q}$ starting in $y$ and ending in $y^*$.  

{\bf (Step 3)} Same as Step 1, with $Q$ instead of $P$. Let $1\ne z\in \widehat{Q}$ be as in Step 2, starting and ending in the same $\widehat{H_i}$, and $v=u\ldots u^*\in\widehat{P}$ a simple as in Step 1, with $u=u_1\ldots u_n$, $u_k\in\widehat{H_{i_k}}$. If $i\ne i_1$, then acting on $C_z\le Q$ via the left adjoint action with elements of $C_v$ we get elements (some non-zero, according to \Cref{rem.nonzero}) of the coalgebra $C_{vzv^*}$ associated to a simple of the desired form. Otherwise, i.e. if $z$ does start in $\widehat{H_{i_1}}$, then first act on $C_z$ with $C_{v'}$ for some $v'=u'\ldots(u')^*\in\widehat{P}$ with $u'$ not starting in $H_{i_1}$, and then act further on $C_{v'z(v')^*}$ with $C_v$. In both cases, ad-invariance ensures that $C_{vzv^*}$ or, respectively, $C_{vv'z(vv')^*}$ is contained in $Q$.   

{\bf (Step 4: End of the proof)} Let $u$ be an arbitrary word on $\bigcup\widehat{H_i}$. According to the previous step, we can find a simple comodule $z=u\ldots\in\widehat{Q}$. The product $a_za_z^*$ in $R_+(Q)$ majorizes $a_ua_u^*$, and hence $R_+(Q)$ contains $a_ua_u^*$ for any $u\in\widehat{H}$. The conclusion follows from \Cref{pr.alt_descr} (b). 
\end{proof of simplfree}

Finally, let us end by noting that most examples of simple quantum groups provided by \Cref{th.simplicity_free_prod} and in the previous section are far from being classical in yet another way: They lack what Wang calls property F, which demands that Hopf subalgebras be ad-invariant automatically (this is equivalent to \cite[3.6]{MR2504527}). For a free product $H$ of at least two non-trivial quantum groups $H_i$, $i\in I$, for instance, the cocenter of some non-group algebra $H_i$ is contained in the cocenter of $H$, but is not ad-invariant by \Cref{th.simplicity_free_prod} (or just by the definition of the coproduct $H=\coprod H_i$). This addresses the remark of \cite[$\S$5]{MR2504527} to the effect that all examples of simple compact quantum groups presented there have property F.



\begin{thebibliography}{10}

\bibitem{2012arXiv1205.6110A}
A.~L. {Agore}, C.~G. {Bontea}, and G.~{Militaru}.
\newblock {Classifying bicrossed products of Hopf algebras}.
\newblock {\em ArXiv e-prints}, May 2012.

\bibitem{MR1334152}
N.~Andruskiewitsch and J.~Devoto.
\newblock Extensions of {H}opf algebras.
\newblock {\em Algebra i Analiz}, 7(1):22--61, 1995.

\bibitem{MR1382474}
Nicol{\'a}s Andruskiewitsch.
\newblock Notes on extensions of {H}opf algebras.
\newblock {\em Canad. J. Math.}, 48(1):3--42, 1996.

\bibitem{MR1378260}
Teodor Banica.
\newblock Th\'eorie des repr\'esentations du groupe quantique compact libre
  {${\rm O}(n)$}.
\newblock {\em C. R. Acad. Sci. Paris S\'er. I Math.}, 322(3):241--244, 1996.

\bibitem{MR1484551}
Teodor Banica.
\newblock Le groupe quantique compact libre {${\rm U}(n)$}.
\newblock {\em Comm. Math. Phys.}, 190(1):143--172, 1997.

\bibitem{MR1709109}
Teodor Banica.
\newblock Symmetries of a generic coaction.
\newblock {\em Math. Ann.}, 314(4):763--780, 1999.

\bibitem{MR2511633}
Teodor Banica and Roland Vergnioux.
\newblock Fusion rules for quantum reflection groups.
\newblock {\em J. Noncommut. Geom.}, 3(3):327--359, 2009.

\bibitem{MR2097019}
Hellmut Baumg{\"a}rtel and Fernando Lled{\'o}.
\newblock Duality of compact groups and {H}ilbert {$C^*$}-systems for
  {$C^*$}-algebras with a nontrivial center.
\newblock {\em Internat. J. Math.}, 15(8):759--812, 2004.

\bibitem{MR1358358}
Vyjayanthi Chari and Andrew Pressley.
\newblock {\em A guide to quantum groups}.
\newblock Cambridge University Press, Cambridge, 1995.
\newblock Corrected reprint of the 1994 original.

\bibitem{2011arXiv1110.6701C}
A.~{Chirvasitu}.
\newblock {Cosemisimple Hopf algebras are faithfully flat over Hopf
  subalgebras}.
\newblock {\em ArXiv e-prints}, October 2011.

\bibitem{chirvasitu:123509}
Alexandru Chirvasitu.
\newblock Free unitary groups are (almost) simple.
\newblock {\em Journal of Mathematical Physics}, 53(12):123509, 2012.

\bibitem{MR1786197}
Sorin D{\u{a}}sc{\u{a}}lescu, Constantin N{\u{a}}st{\u{a}}sescu, and
  {\c{S}}erban Raianu.
\newblock {\em Hopf algebras}, volume 235 of {\em Monographs and Textbooks in
  Pure and Applied Mathematics}.
\newblock Marcel Dekker Inc., New York, 2001.
\newblock An introduction.

\bibitem{MR1310296}
Mathijs~S. Dijkhuizen and Tom~H. Koornwinder.
\newblock C{QG} algebras: a direct algebraic approach to compact quantum
  groups.
\newblock {\em Lett. Math. Phys.}, 32(4):315--330, 1994.

\bibitem{MR934283}
V.~G. Drinfeld.
\newblock Quantum groups.
\newblock In {\em Proceedings of the {I}nternational {C}ongress of
  {M}athematicians, {V}ol. 1, 2 ({B}erkeley, {C}alif., 1986)}, pages 798--820,
  Providence, RI, 1987. Amer. Math. Soc.

\bibitem{MR2183279}
Pavel Etingof, Dmitri Nikshych, and Viktor Ostrik.
\newblock On fusion categories.
\newblock {\em Ann. of Math. (2)}, 162(2):581--642, 2005.

\bibitem{MR2383894}
Shlomo Gelaki and Dmitri Nikshych.
\newblock Nilpotent fusion categories.
\newblock {\em Adv. Math.}, 217(3):1053--1071, 2008.

\bibitem{MR797001}
Michio Jimbo.
\newblock A {$q$}-difference analogue of {$U({\mathfrak g})$} and the
  {Y}ang-{B}axter equation.
\newblock {\em Lett. Math. Phys.}, 10(1):63--69, 1985.

\bibitem{MR1321145}
Christian Kassel.
\newblock {\em Quantum groups}, volume 155 of {\em Graduate Texts in
  Mathematics}.
\newblock Springer-Verlag, New York, 1995.

\bibitem{MR1492989}
Anatoli Klimyk and Konrad Schm{\"u}dgen.
\newblock {\em Quantum groups and their representations}.
\newblock Texts and Monographs in Physics. Springer-Verlag, Berlin, 1997.

\bibitem{MR1741102}
Johan Kustermans and Lars Tuset.
\newblock A survey of {$C^*$}-algebraic quantum groups. {I}.
\newblock {\em Irish Math. Soc. Bull.}, (43):8--63, 1999.

\bibitem{MR1712872}
Saunders Mac~Lane.
\newblock {\em Categories for the working mathematician}, volume~5 of {\em
  Graduate Texts in Mathematics}.
\newblock Springer-Verlag, New York, second edition, 1998.

\bibitem{MR1016381}
Yu.~I. Manin.
\newblock {\em Quantum groups and noncommutative geometry}.
\newblock Universit\'e de Montr\'eal Centre de Recherches Math\'ematiques,
  Montreal, QC, 1988.

\bibitem{MR1243637}
Susan Montgomery.
\newblock {\em Hopf algebras and their actions on rings}, volume~82 of {\em
  CBMS Regional Conference Series in Mathematics}.
\newblock Published for the Conference Board of the Mathematical Sciences,
  Washington, DC, 1993.

\bibitem{MR2130607}
Michael M{\"u}ger.
\newblock On the center of a compact group.
\newblock {\em Int. Math. Res. Not.}, (51):2751--2756, 2004.

\bibitem{2013arXiv1305.1080P}
I.~{Patri}.
\newblock {Normal Subgroups, Center and Inner Automorphisms of Compact Quantum
  Groups}.
\newblock {\em ArXiv e-prints}, May 2013.

\bibitem{MR2917093}
Sven Raum.
\newblock Isomorphisms and fusion rules of orthogonal free quantum groups and
  their free complexifications.
\newblock {\em Proc. Amer. Math. Soc.}, 140(9):3207--3218, 2012.

\bibitem{MR1015339}
N.~Yu. Reshetikhin, L.~A. Takhtadzhyan, and L.~D. Faddeev.
\newblock Quantization of {L}ie groups and {L}ie algebras.
\newblock {\em Algebra i Analiz}, 1(1):178--206, 1989.

\bibitem{MR1382726}
Alfons Van~Daele and Shuzhou Wang.
\newblock Universal quantum groups.
\newblock {\em Internat. J. Math.}, 7(2):255--263, 1996.

\bibitem{MR1316765}
Shuzhou Wang.
\newblock Free products of compact quantum groups.
\newblock {\em Comm. Math. Phys.}, 167(3):671--692, 1995.

\bibitem{MR1637425}
Shuzhou Wang.
\newblock Quantum symmetry groups of finite spaces.
\newblock {\em Comm. Math. Phys.}, 195(1):195--211, 1998.

\bibitem{MR2504527}
Shuzhou Wang.
\newblock Simple compact quantum groups. {I}.
\newblock {\em J. Funct. Anal.}, 256(10):3313--3341, 2009.

\bibitem{Wang_new}
Shuzhou Wang.
\newblock On the problem of classifying simple compact quantum groups.
\newblock {\em Banach Center Publ.}, 98:433--453, 2012.

\bibitem{MR901157}
S.~L. Woronowicz.
\newblock Compact matrix pseudogroups.
\newblock {\em Comm. Math. Phys.}, 111(4):613--665, 1987.

\bibitem{MR1616348}
S.~L. Woronowicz.
\newblock Compact quantum groups.
\newblock In {\em Sym\'etries quantiques ({L}es {H}ouches, 1995)}, pages
  845--884. North-Holland, Amsterdam, 1998.

\end{thebibliography}
\addcontentsline{toc}{section}{References}

\end{document}